\newtheorem{thm}{Theorem}[section]
\newtheorem{lem}[thm]{Lemma}
\newtheorem{cor}[thm]{Corollary}
\newtheorem{prop}[thm]{Proposition}
\newtheorem{ex}[thm]{Example}
\newtheorem*{prob*}{Open problem}
\theoremstyle{definition}
\newtheorem{defi}[thm]{Definition}
\theoremstyle{remark}
\newtheorem{rem}[thm]{Remark}
\newtheorem*{rem*}{Remark}
\DeclareMathOperator{\rad}{rad}
\DeclareMathOperator{\Hom}{Hom}
\newcommand{\kringel}{\mathbin{\raise0.5pt\hbox{$\scriptstyle\circ$}}}
\newcommand{\pkt}{\mathbin{\raise0.5pt\hbox{$\scriptstyle\bullet$}}}
\newcommand{\sq}{\mathbin{\raise0.5pt\hbox{$\scriptscriptstyle\square$}}}
\newcommand{\C}{\mathbb{C}}
\newcommand{\ad}{{\rm ad}}
\newcommand{\Der}{{\rm Der}}
\newcommand{\Sym}{{\rm Sym}}
\newcommand{\nil}{\mathop{\rm nil}}
\newcommand{\Lf}{\mathfrak{f}}
\newcommand{\Lg}{\mathfrak{g}}
\newcommand{\Ll}{\mathfrak{l}}
\newcommand{\Ln}{\mathfrak{n}}
\newcommand{\Lr}{\mathfrak{r}}
\newcommand{\Ls}{\mathfrak{s}}
\newcommand{\CA}{\mathcal{A}}
\newcommand{\ep}{\varepsilon}
\newcommand{\ra}{\rightarrow}
\renewcommand{\phi}{\varphi}
\begin{document}

\title[Cohomology]{Cohomology of perfect Lie algebras}


\author[D. Burde]{Dietrich Burde}
\author[F. Wagemann]{Friedrich Wagemann}
\address{Fakult\"at f\"ur Mathematik\\
  Universit\"at Wien\\
  Oskar-Morgenstern-Platz 1\\
  1090 Wien \\
  Austria}
\email{dietrich.burde@univie.ac.at}
\address{Laboratoire de math\'ematiques Jean Leray\\
  UMR 6629 du CNRS\\
  Universit\'e de Nantes \\
  2, rue de la Houssini\`ere, F-44322 Nantes Cedex 3 \\
  France}
\email{wagemann@math.univ-nantes.fr}

\date{\today}

\subjclass[2000]{Primary 17A32, Secondary 17B56}
\keywords{Lie algebra cohomology, perfect Lie algebras}

\begin{abstract}
We study the adjoint cohomology of perfect Lie algebras over the complex numbers. For the family of perfect Lie algebras
$\Lg=\mathfrak{sl}_2(\C)\ltimes V_m$ we obtain some explicit results for $H^k(\Lg,\Lg)$ with $k\ge 0$.
Here $V_m$ is the irreducible representation of $\mathfrak{sl}_2(\C)$ of dimension $m+1$.
For the computation of the cohomology we use the Hochschild-Serre formula, a long exact sequence in the
cohomology and explicit formulas for the multiplicities of $V_k$ in the exterior product $\Lambda^j(V_m)$ for $j\le 4$.
In general we cannot determine the total adjoint cohomology for $\mathfrak{sl}_2(\C)\ltimes V_m$, but for some small
$m$ this is possible. We also give a classification of complex perfect Lie algebras $\Lg$ of dimension $n\le 9$ and
explicitly compute the cohomology spaces $H^k(\Lg,\Lg)$ with $k=0,1,2$ for all Lie algebras from the classification list.  
\end{abstract}

\maketitle

\section{Introduction}

Let $\Lg$ be a finite-dimensional Lie algebra over a field of characteristic zero. If $\Lg$ is semisimple, then
there are many vanishing results known on the Lie algebra cohomology. For example, by Whitehead's first and second lemma we have
$H^1(\Lg,M)=H^2(\Lg,M)=0$ for every finite-dimensional $\Lg$-module $M$. It is natural, to study the cohomology also for
{\em perfect} Lie algebras, i.e., for Lie algebras $\Lg$ satisfying $[\Lg,\Lg]=\Lg$. It turns out that the Whitehead lemmas
are no longer true for perfect Lie algebras. So the study of the first and second cohomology groups is much
more interesting in this case. \\[0.2cm]
The most important $\Lg$-modules for the coefficients of the cohomology of $\Lg$ are the trivial module and the adjoint module.
We are interested here in particular in the adjoint module. One reason is a conjecture by Pirashvili \cite{P1}, stating that
a perfect complex Lie algebra is semisimple if and only if the adjoint cohomology vanishes, i.e., if it satisfies
$H^n(\Lg,\Lg)=0$ for all $n\ge 0$. One direction is easy to see, namely that the adjoint cohomology vanishes for
complex semisimple (and hence perfect) Lie algebras. However the converse direction is still open. Many authors have constructed
perfect non-semisimple Lie algebras $\Lg$ satisfying 
\[
H^0(\Lg,\Lg)=H^1(\Lg,\Lg)=H^2(\Lg,\Lg)=0,
\]
see \cite{A1,ABB,B1,B2}. But so far, nobody found a perfect non-semisimple Lie algebra with vanishing adjoint cohomology,
or could prove that such a Lie algebra does not exist. \\[0.2cm]
On the other hand, the adjoint cohomology of perfect Lie algebras is interesting in general, and not so much is known about it.
We began a study in \cite{BU75}, which we will continue here. There are some explicit results in the literature for 
particular families, namely for the perfect non-semisimple Lie algebras $\Lg=\mathfrak{sl}_2(\C)\ltimes V_m$, where $V_m$ is the
irreducible representation of $\mathfrak{sl}_2(\C)$ of dimension $m+1$ with $m\ge 1$.
Here the cohomology groups $H^k(\Lg,\Lg)$ have been determined for $k=0,1,2$, see \cite{RAU,RIC} and the references therein. \\[0.2cm]
In this paper we will extend these results, and compute the adjoint cohomology in various other cases.
In particular, for odd $m=2n-1$, we show in Proposition $\ref{3.20}$ that
\[
H^3(\Lg,\Lg) \cong \C^{\lfloor \frac{n+1}{3}\rfloor}, \; H^4(\Lg,\Lg) \cong \C.
\]
Here we use a long exact sequence in cohomology and compute certain spaces determined by the  
multiplicities $N(j,k,n)$ of $V_{jk-2n}$ in the exterior product $\Lambda^j(V_{j+k-1})$ for $j\le 4$.
The numbers $N(j,k,n)$ are given by the coefficient of $q^n$ in the Gaussian $q$-polynomial $(1-q)\genfrac[]{0pt}{1}{j+k}{k}_q$.
We have $N(j,k,n)=p(j,k,n)-p(j,k,n-1)$, where $p(j,k,n)$ is the number of partitions of $n$ into at most $k$ parts
with largest part at most $j$. We are able to compute some of these multiplicities, so that we can
apply the Hochschild-Serre formula to obtain a result on the adjoint cohomology of $\Lg$.
For $m=1,2,3,5,7$ we can even compute the total adjoint cohomology of $\mathfrak{sl}_2(\C)\ltimes V_m$. \\[0.2cm]
Finally, we give a classification of complex perfect Lie algebras $\Lg$ of dimension $n\le 9$ and
explicitly compute the cohomology spaces $H^k(\Lg,\Lg)$ with $k=0,1,2$ for all Lie algebras from the classification list.

\section{Perfect Lie algebras}

Let $\Lg$ be a finite-dimensional complex Lie algebra. Although many results also hold for arbitrary fields of characteristic zero,
it is sufficient for our main results to consider only complex Lie algebras.
We denote by $\Der(\Lg)$ the Lie algebra of derivations of $\Lg$, and by $\ad(\Lg)$ the ideal of inner derivations in $\Der(\Lg)$.
Furthermore, we denote by $Z(\Lg)$ the center of $\Lg$, by $\rad(\Lg)$ the solvable radical
of $\Lg$, and by $\nil(\Lg)$ the nilradical of $\Lg$.

\begin{defi}
A Lie algebra $\Lg$ is called {\em perfect}, if it satisfies $[\Lg,\Lg]=\Lg$. 
\end{defi}

Note that a Lie algebra $\Lg$ is perfect if and only if $H^1(\Lg,\C)=0$ for the trivial $\Lg$-module $\C$.
It is known that the solvable radical of a perfect Lie algebra is nilpotent, see for example Lemma $2.4$ in \cite{BU75}.

\begin{lem}
Let $\Lg$ be a perfect Lie algebra. Then we have $\rad(\Lg)=\nil(\Lg)$.
\end{lem}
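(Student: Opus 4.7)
The plan is to use the Levi decomposition and the classical fact that for any finite-dimensional Lie algebra $\Lg$ in characteristic zero one has $[\Lg,\rad(\Lg)] \subseteq \nil(\Lg)$. Combined with perfectness, this pushes the whole radical into the nilradical.

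First, I would write $\Lg = \Ls \ltimes \Lr$, where $\Lr = \rad(\Lg)$ and $\Ls$ is a Levi subalgebra (semisimple). Since $\Ls$ is semisimple it is itself perfect, so $[\Ls,\Ls] = \Ls$. Expanding the hypothesis $\Lg = [\Lg,\Lg]$ with respect to the vector space decomposition $\Lg = \Ls \oplus \Lr$, and projecting onto $\Lr$ along $\Ls$, I would observe that
\[
[\Lg,\Lg] = [\Ls,\Ls] + [\Ls,\Lr] + [\Lr,\Lr] = \Ls + [\Ls,\Lr] + [\Lr,\Lr].
\]
Since $[\Ls,\Lr]$ and $[\Lr,\Lr]$ lie in $\Lr$, comparing $\Lr$-components yields
\[
\Lr \;=\; [\Ls,\Lr] + [\Lr,\Lr] \;=\; [\Lg,\Lr].
\]

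Second, I would invoke the classical result $[\Lg,\rad(\Lg)] \subseteq \nil(\Lg)$, which is precisely Lemma~2.4 of \cite{BU75} referenced just above the statement. Combined with the identity from the previous paragraph, this gives $\rad(\Lg) \subseteq \nil(\Lg)$. Since the reverse inclusion $\nil(\Lg) \subseteq \rad(\Lg)$ holds for every Lie algebra (a nilpotent ideal is in particular solvable), we conclude $\rad(\Lg) = \nil(\Lg)$.

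The only nontrivial input is the inclusion $[\Lg,\rad(\Lg)] \subseteq \nil(\Lg)$, which is the main obstacle if one wants a self-contained argument rather than a citation. The standard proof uses Lie's theorem over $\C$: one picks a basis in which every $\ad(x)$ with $x \in \Lr$ is upper triangular on $\Lg$, so that each $\ad([g,r])$ for $g\in\Lg,\ r\in\Lr$ is a commutator of such an operator with $\ad(g)$ and is therefore strictly upper triangular in the associated graded, making $[\Lg,\Lr]$ act nilpotently on $\Lg$; Engel's theorem then promotes the ideal $[\Lg,\Lr]$ to a nilpotent ideal, hence into $\nil(\Lg)$. Since the paper already cites this fact, I would simply refer to it rather than reproduce the proof.
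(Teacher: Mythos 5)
Your main argument is correct, and it is more self-contained than what the paper itself offers: for this lemma the paper gives no argument at all, it simply points to Lemma 2.4 of \cite{BU75}. Your key step is fine: writing $\Lg=\Ls\ltimes\Lr$ and using $[\Ls,\Ls]=\Ls$, $[\Ls,\Lr]+[\Lr,\Lr]\subseteq\Lr$, the equality $\Lg=[\Lg,\Lg]=\Ls\oplus\bigl([\Ls,\Lr]+[\Lr,\Lr]\bigr)$ forces $\Lr=[\Ls,\Lr]+[\Lr,\Lr]=[\Lg,\Lr]$, and the classical inclusion $[\Lg,\rad(\Lg)]\subseteq\nil(\Lg)$ (valid for any finite-dimensional Lie algebra in characteristic zero) then gives $\rad(\Lg)\subseteq\nil(\Lg)$, hence equality.

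Two caveats. First, the citation is misattributed: the sentence preceding the lemma indicates that Lemma 2.4 of \cite{BU75} is the statement of the present lemma itself (the radical of a perfect Lie algebra is nilpotent), not the inclusion $[\Lg,\rad(\Lg)]\subseteq\nil(\Lg)$; for the latter you should cite Bourbaki (Lie Groups and Lie Algebras, Ch.~I, \S 5) or Jacobson, otherwise your reference is essentially to the statement being proved. Second, your closing sketch of that classical inclusion does not work as written: Lie's theorem simultaneously upper-triangularizes the operators $\ad(x)$ for $x\in\Lr$, which makes $[\ad(r),\ad(r')]$ with $r,r'\in\Lr$ strictly upper triangular, but $[\ad(g),\ad(r)]$ for arbitrary $g\in\Lg$ need not be nilpotent --- a commutator of an upper-triangular matrix with an arbitrary matrix is in general not nilpotent. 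The genuine proofs of $[\Lg,\rad(\Lg)]\subseteq\nil(\Lg)$ need more (for instance a Levi decomposition combined with Weyl's theorem, or a trace-form/Cartan-criterion argument). Since you invoke the fact by citation rather than through this sketch, your proof stands, but the sketch should be dropped or repaired.
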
  

Of course every semisimple Lie algebra is perfect, but the converse need not be true. We can use the Levi decomposition
of a Lie algebra to decide whether or not it is perfect. The following result is well known.

\begin{prop}
Let $\Lg=\Ls\ltimes \rad(\Lg)$ be a Levi decomposition of a Lie algebra $\Lg$ with solvable radical $\Lr=\rad(\Lg)$. Consider 
$V=\Lr/[\Lr,\Lr]$ as an $\Ls$-module. Then $\Lg$ is perfect if and only if $V$ does not contain the trivial
$1$-dimensional $\Ls$-module.
\end{prop}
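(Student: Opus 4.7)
The plan is to express the condition $[\Lg,\Lg]=\Lg$ directly in terms of the Levi decomposition and then reduce it to a statement about the $\Ls$-module $V$.

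First I would expand the bracket. Writing $\Lg=\Ls\oplus\Lr$ as vector spaces, bilinearity gives
\[
[\Lg,\Lg] = [\Ls,\Ls] + [\Ls,\Lr] + [\Lr,\Lr].
\]
Since $\Ls$ is semisimple we have $[\Ls,\Ls]=\Ls$, and $[\Ls,\Lr]\subseteq\Lr$ because $\Lr$ is an ideal of $\Lg$. Consequently $[\Lg,\Lg] = \Ls \oplus \bigl([\Ls,\Lr]+[\Lr,\Lr]\bigr)$, so $\Lg$ is perfect if and only if
\[
\Lr = [\Ls,\Lr] + [\Lr,\Lr].
\]

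Next I would pass to the quotient $V=\Lr/[\Lr,\Lr]$, with projection $\pi\colon \Lr\to V$. Because $[\Ls,[\Lr,\Lr]]\subseteq [\Lr,\Lr]$, the bracket descends to an $\Ls$-module structure on $V$; denote this action by $s\cdot\pi(x)=\pi([s,x])$. The displayed equation above is then equivalent to
\[
V = \Ls\cdot V.
\]
Thus the problem has been reduced to determining when the semisimple Lie algebra $\Ls$ acts on the finite-dimensional module $V$ in such a way that $\Ls\cdot V = V$.

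Finally I would apply Weyl's theorem on complete reducibility to decompose $V=\bigoplus_i W_i$ into irreducible $\Ls$-modules. For each summand $W_i$, the subspace $\Ls\cdot W_i$ is an $\Ls$-submodule, hence by irreducibility equals either $0$ or $W_i$; it is $0$ precisely when $W_i$ is the trivial module. Therefore $\Ls\cdot V=V$ if and only if none of the $W_i$ is trivial, which is the asserted criterion. There is no genuine obstacle: the only point that requires a moment's care is the verification that the $\Ls$-action descends to the quotient $V$, and this is automatic from the Jacobi identity together with $\Lr$ being an ideal.
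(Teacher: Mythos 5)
Your proof is correct: the reduction of $[\Lg,\Lg]=\Lg$ to $\Lr=[\Ls,\Lr]+[\Lr,\Lr]$, the passage to $V=\Lr/[\Lr,\Lr]$, and the application of Weyl's theorem with Schur-type reasoning on each irreducible summand are all sound, and the one delicate point (that the $\Ls$-action descends to $V$) is handled. The paper states this proposition as well known and gives no proof, so there is nothing to compare against; your argument is exactly the standard one that the omitted proof would contain.
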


Using this result we can construct perfect non-semisimple Lie algebras as follows.

\begin{ex}
Let $\Ls$ be a semisimple Lie algebra and $V$ be a representation of $\Ls$ not
containing the $1$-dimensional trivial representation, and define a bracket 
on $\Ls \times V$ by 
\[
[(X,v),(Y,u)] := ([X,Y],Xu-Yv). 
\]
This turns the vector space $\Lg=\Ls \dotplus V$ into a perfect Lie algebra, the semidirect
product $\Lg=\Ls\ltimes V$. 
\end{ex}

Note that $Z(\Lg)=0$ for these Lie algebras, and that $\rad(\Lg) = V$ is abelian. Consider the Lie algebra
$\mathfrak{sl}_2(\C)$ with basis $(e_1,e_2,e_3)$ and Lie brackets given by
\[
[e_1,e_2]=e_3, \, [e_1,e_3] = -2e_1,\, [e_2,e_3] = 2e_2. 
\]
Let $V_m$ be the irreducible representation of dimension $m+1$ with basis $(e_4,\ldots ,e_{m+4})$.

\begin{ex}\label{2.5}
The Lie algebra $\Lg=\mathfrak{sl}_2(\C)\ltimes V_m$ with $m\ge 1$ is a non-semisimple
perfect Lie algebra of dimension $m+4$, with basis $(e_1,\ldots ,e_{m+4})$.
\end{ex}

It is clear that $\Lg$ has a nonzero solvable radical, so that it is not semisimple. Also,
it has a trivial center. However, note that a perfect Lie algebra may have a non-trivial center in general.
For $m=1$ the Lie brackets for $\Lg=\mathfrak{sl}_2(\C)\ltimes V_1$ are given by
\vspace*{0.5cm}
\begin{align*}
[e_1,e_2] & = e_3,     & [e_2,e_3] & = 2e_2,    & [e_3,e_5]& =-e_5.\\
[e_1,e_3] & = -2e_1,   & [e_2,e_4] & = e_5,     &                  \\
[e_1,e_5] & = e_4,     & [e_3,e_4] & = e_4,     &                   \\
\end{align*}

\section{Adjoint Lie algebra cohomology}

In this section we want to obtain results on the adjoint cohomology spaces $H^k(\Lg,\Lg)$ with $k\ge 0$ for
the perfect Lie algebras $\Lg=\mathfrak{sl}_2(\C)\ltimes V_m$ with $m\ge 1$. For $k\le 1$ this is well known.

\begin{prop}\label{3.1}
Let  $\Lg=\mathfrak{sl}_2(\C)\ltimes V_m$. Then we have, for all $m\ge 1$,
\begin{align*}
H^0(\Lg,\Lg) & = 0, \\
H^1(\Lg,\Lg) & \cong \C.
\end{align*}  
\end{prop}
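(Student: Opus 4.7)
The vanishing $H^0(\Lg,\Lg) = Z(\Lg) = 0$ was already observed just after Example~\ref{2.5}: if $g = s + v \in Z(\Lg)$ with $s \in \mathfrak{sl}_2(\C)$ and $v \in V_m$, then $[g,s'] = [s,s'] - s' \cdot v = 0$ for all $s' \in \mathfrak{sl}_2(\C)$ forces $s \in Z(\mathfrak{sl}_2(\C)) = 0$ and $v \in V_m^{\mathfrak{sl}_2(\C)} = 0$, the latter because $V_m$ is a non-trivial irreducible module.

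For $H^1(\Lg,\Lg) \cong \Der(\Lg)/\ad(\Lg)$ my plan is to describe $\Der(\Lg)$ explicitly via the Levi decomposition, using Whitehead's first lemma together with Schur's lemma to cut it down to $\ad(\Lg)$ plus a one-dimensional complement. Since $V_m = \rad(\Lg) = \nil(\Lg)$ by the lemma above, any derivation $D$ of $\Lg$ satisfies $D(V_m) \subseteq V_m$. Split the restriction $D|_{\mathfrak{sl}_2(\C)} = D_{\Ls} + D_V$ with $D_{\Ls}$ valued in $\mathfrak{sl}_2(\C)$ and $D_V$ valued in $V_m$. The derivation identity on $\mathfrak{sl}_2(\C) \times \mathfrak{sl}_2(\C)$ separates into two pieces: $D_{\Ls}$ is a derivation of $\mathfrak{sl}_2(\C)$, hence inner, and $D_V$ is a $1$-cocycle with coefficients in $V_m$, hence a $1$-coboundary by Whitehead's first lemma $H^1(\mathfrak{sl}_2(\C), V_m) = 0$. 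After subtracting the corresponding inner derivation I may assume $D|_{\mathfrak{sl}_2(\C)} = 0$.

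Under this normalization the derivation identity applied to $[s,v]$ with $s \in \mathfrak{sl}_2(\C)$ and $v \in V_m$ gives $D(s \cdot v) = s \cdot D(v)$, so $D|_{V_m} : V_m \to V_m$ is $\mathfrak{sl}_2(\C)$-equivariant, and Schur's lemma forces $D|_{V_m} = \lambda\,\id_{V_m}$ for some $\lambda \in \C$. A direct check shows that the map $\Delta : s + v \mapsto v$ is indeed a derivation of $\Lg$, and it is not inner because the restriction $\ad(s_0 + v_0)|_{V_m}$ coincides with the $\mathfrak{sl}_2(\C)$-action of $s_0$ on $V_m$, which has vanishing trace and therefore cannot equal a non-zero scalar operator on the $(m+1)$-dimensional space $V_m$. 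This yields $\Der(\Lg)/\ad(\Lg) = \C \cdot [\Delta] \cong \C$. The main thing to watch is the splitting of the derivation identity and the matching of sign conventions between the $1$-cocycle $D_V$ and the coboundary that kills it, but this is routine once the adjoint action of $\mathfrak{sl}_2(\C)$ on $V_m$ is fixed as the given module structure.
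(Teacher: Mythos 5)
Your argument is correct, but the $H^1$ part takes a genuinely different route from the paper. For $H^0$ you do exactly what the paper does: $H^0(\Lg,\Lg)=Z(\Lg)$ and the explicit bracket computation shows $Z(\Lg)=0$. For $H^1$, however, the paper simply invokes the general isomorphism $H^1(\Lg,\Lg)\cong H^1(V,V)^{\Ls}\cong \Hom_{\Ls}(V,V)$ from Proposition $3.3$ of \cite{BU75} (a consequence of the Hochschild--Serre machinery that drives the rest of the paper) and concludes with Schur's lemma, whereas you compute $\Der(\Lg)/\ad(\Lg)$ by hand: derivations preserve $V_m=\rad(\Lg)=\nil(\Lg)$, the restriction to $\mathfrak{sl}_2(\C)$ splits into an inner derivation of $\mathfrak{sl}_2(\C)$ plus a $1$-cocycle with values in $V_m$ killed by Whitehead's first lemma, and after normalization Schur's lemma leaves only scalars on $V_m$, realized by the non-inner grading derivation $\Delta(s+v)=v$ (not inner by the trace argument, since $\mathfrak{sl}_2(\C)$ acts tracelessly on $V_m$). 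This is a sound and more elementary, self-contained proof; its one tacit ingredient is the standard fact that derivations preserve the radical (equivalently the nilradical), which you should state or cite explicitly --- it is what rules out, e.g.\ for $m=2$, a component $V_m\to\Ls$ of $D|_{V_m}$, although that can also be excluded directly using the derivation identity on $V_m\times V_m$. What your route buys is an explicit outer derivation generating $H^1$; what the paper's route buys is the general formula $H^1(\Lg,\Lg)\cong\Hom_{\Ls}(V,V)$ for arbitrary $\Ls$-modules $V$, which is reused later (Proposition \ref{5.1}) to get $\dim H^1=e_1^2+\cdots+e_m^2$.
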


\begin{proof}
For $k=0$ we have $H^0(\Lg,\Lg)=Z(\Lg)$ for all Lie algebras $\Lg$. Hence the claim follows from $Z(\Lg)=0$ for
$\Lg=\mathfrak{sl}_2(\C)\ltimes V_m$. The result for $k=1$ is a special case of a general result, see Proposition
$3.3$ in \cite{BU75}. Let  $\Lg=\Ls\ltimes V$, where $\Ls$ is semisimple and $V$ is an $\Ls$-module. Then we have
\[
H^1(\Lg,\Lg)\cong H^1(V,V)^{\Ls}\cong\Hom_{\Ls}(V,V).
\]
In particular, if $V$ is simple, we have $\dim H^1(\Lg,\Lg)=1$. 
\end{proof}

We recall the following from \cite{BU75}. Let $\Lg=\Ls\ltimes V$, where $\Ls$ is semisimple and $V$ is an $\Ls$-module.
Consider $V$ as an abelian Lie algebra. Then we have a short exact sequence of $\Lg$-modules
\[
0\ra V\ra \Lg\ra \Lg/V\ra 0. 
\]  
This is also a short exact sequence of $V$-modules by restriction to $V\subseteq \Lg$. Here $V$ and $\Lg/V$ are trivial
modules. This yields a long exact sequence in cohomology, because the functor of $\Ls$-invariants is exact on finite-dimensional
modules.

\begin{lem}\label{3.2}
Let $\Lg=\Ls\ltimes V$, where $\Ls$ is semisimple and $V$ is an $\Ls$-module. Then we have
\begin{align*}
0 & \to H^0(V,V)^\Ls\to H^0(V,\Lg)^\Ls\to H^0(V,\Lg/V)^\Ls \\
  & \to H^1(V,V)^\Ls\to H^1(V,\Lg)^\Ls\to H^1(V,\Lg/V)^\Ls \\
  & \to H^2(V,V)^\Ls\to H^2(V,\Lg)^\Ls\to H^2(V,\Lg/V)^\Ls \\
  & \to H^3(V,V)^\Ls\to H^3(V,\Lg)^\Ls\to H^3(V,\Lg/V)^\Ls \to \cdots 
\end{align*}
The sequence continues until $k> \dim V$, in which case we have
\[
H^k(V,V)^{\Ls}=H^k(V,\Lg)^\Ls=H^k(V,\Lg/V)^\Ls=0.
\]  
\end{lem}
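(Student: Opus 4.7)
The plan is to apply the long exact sequence in cohomology to the short exact sequence of $V$-modules
\[
0\to V\to \Lg\to \Lg/V\to 0,
\]
and then take $\Ls$-invariants on all terms. First I would verify that this is indeed short exact when restricted to $V$-modules. As $\Lg$-modules it is clear. Restricting the action to $V\subseteq \Lg$, both $V$ and $\Lg/V$ become trivial $V$-modules: the former because $V$ is abelian and hence $[V,V]=0$, the latter because $V$ is an ideal of $\Lg$ so that $[V,\Lg]\subseteq V$ projects to zero in $\Lg/V$. Note that $\Lg$ itself is typically \emph{not} a trivial $V$-module, but this does not matter for the argument.

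Second, applying $H^*(V,-)$ to the short exact sequence yields a long exact sequence in cohomology. Each term $H^k(V,M)$ for $M\in\{V,\Lg,\Lg/V\}$ carries a natural action of $\Ls$: since $\Ls$ acts on $V$ by derivations (coming from the semidirect product structure) and on each of the coefficient modules $M$ compatibly with the inclusion $V\hookrightarrow \Lg\to \Lg/V$, the Chevalley--Eilenberg cochain complex $C^k(V,M)=\Hom(\Lambda^k V,M)$ inherits an $\Ls$-module structure, and the differential is $\Ls$-equivariant. The cohomology and the connecting homomorphisms therefore inherit $\Ls$-actions and are $\Ls$-equivariant.

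Third, I would apply the functor of $\Ls$-invariants $(-)^\Ls$ to this $\Ls$-equivariant long exact sequence. By Weyl's theorem on complete reducibility, every finite-dimensional $\Ls$-module splits as a direct sum of its isotypic components, so the functor $(-)^\Ls$ is exact on finite-dimensional $\Ls$-modules. Consequently the resulting sequence of $\Ls$-invariants remains exact, yielding precisely the long exact sequence stated in the lemma.

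Finally, for the termination statement, note that $H^k(V,M)$ is computed by the Chevalley--Eilenberg complex $\Hom(\Lambda^\bullet V,M)$, and for $k>\dim V$ we have $\Lambda^k V=0$. Hence $H^k(V,M)=0$ for every $V$-module $M$, in particular for $M\in\{V,\Lg,\Lg/V\}$, and the same holds after taking $\Ls$-invariants. The only real subtlety is checking the $\Ls$-equivariance of the connecting homomorphism, which follows from its snake-lemma construction applied to the $\Ls$-equivariant short exact sequence of cochain complexes.
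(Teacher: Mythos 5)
Your proposal is correct and follows essentially the same route as the paper: restricting the short exact sequence $0\to V\to\Lg\to\Lg/V\to 0$ to $V$-modules, taking the long exact sequence in cohomology, and applying the functor of $\Ls$-invariants, which is exact on finite-dimensional modules by complete reducibility. You merely spell out details the paper leaves implicit, such as the $\Ls$-equivariance of the connecting homomorphisms and the vanishing of $\Lambda^k V$ for $k>\dim V$.
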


For each $k\ge 0$ the following holds, see \cite{BU75}.

\begin{lem}\label{3.3}
Let $\Lg=\Ls\ltimes V$, where $\Ls$ is semisimple and $V$ is an $\Ls$-module. Then we have
\begin{align*}
H^k(V,V)^{\Ls} & \cong \Hom_{\Ls}(\Lambda ^k(V),V),\\  
H^k(V,\Lg/V)^{\Ls} & \cong \Hom_{\Ls}(\Lambda^k(V),\Ls).
\end{align*}  
\end{lem}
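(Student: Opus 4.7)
The plan is to use the standard Chevalley--Eilenberg complex for $V$-cohomology and observe that, in both cases, $V$ acts trivially on the coefficient module, which collapses the entire cochain complex to its terms (zero differentials).

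First I would verify the triviality of the $V$-action. Since $V$ is abelian as a Lie algebra, its adjoint action on itself is zero, so $V$ is a trivial $V$-module. For the quotient $\Lg/V \cong \Ls$, take $v\in V$ and $x = s+w \in \Ls\dotplus V = \Lg$; then $v\cdot(x+V) = [v,s]+[v,w]+V = [v,s]+V$, and since $V$ is an ideal we have $[v,s]\in V$, so the action on $\Lg/V$ is trivial as well. This is the only place where the semidirect product structure really intervenes, and it is a direct one-line verification.

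Next I would apply the Chevalley--Eilenberg formula. For any Lie algebra $\Lh$ and $\Lh$-module $M$, the cochains are $C^k(\Lh,M) = \Hom(\Lambda^k\Lh, M)$, and the differential is
\[
(df)(x_0,\ldots,x_k) = \sum_i (-1)^i x_i\cdot f(x_0,\ldots,\widehat{x_i},\ldots,x_k) + \sum_{i<j}(-1)^{i+j} f([x_i,x_j],x_0,\ldots,\widehat{x_i},\ldots,\widehat{x_j},\ldots,x_k).
\]
When $\Lh = V$ is abelian and $M$ is a trivial $V$-module, both sums vanish identically: the first because the action is trivial, the second because all brackets $[x_i,x_j]$ are zero. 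Therefore $d=0$ on $C^\bullet(V,M)$, and $H^k(V,M) = \Hom(\Lambda^k V, M)$ for $M = V$ and $M = \Lg/V \cong \Ls$ respectively.

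Finally I would take $\Ls$-invariants. Because $V$ and $\Lg/V\cong\Ls$ carry compatible $\Ls$-actions that make the whole Chevalley--Eilenberg complex $\Ls$-equivariant, the isomorphism $H^k(V,M)\cong \Hom(\Lambda^k V,M)$ is $\Ls$-equivariant. Passing to $\Ls$-invariants therefore gives
\[
H^k(V,V)^{\Ls}\cong \Hom(\Lambda^k V,V)^{\Ls} = \Hom_{\Ls}(\Lambda^k V,V),
\]
and similarly $H^k(V,\Lg/V)^{\Ls}\cong \Hom_{\Ls}(\Lambda^k V,\Ls)$. There is no real obstacle here; the only point worth being careful about is the quotient action on $\Lg/V$, which I would spell out explicitly to justify that everything reduces to the abelian, trivially-acting case.
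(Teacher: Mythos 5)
Your argument is correct and is exactly the standard one: since $V$ is abelian and acts trivially on both $V$ and $\Lg/V\cong\Ls$, the Chevalley--Eilenberg differential vanishes, so $H^k(V,M)=\Hom(\Lambda^k V,M)$ $\Ls$-equivariantly, and taking invariants gives the claim. The paper itself only cites \cite{BU75} for this lemma, and your proof coincides with the argument given there, so there is nothing to add beyond the careful check of the trivial action on $\Lg/V$, which you did.
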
  

The Hochschild-Serre formula from \cite{HS} yields the following result, see \cite{BU75}.

\begin{prop}\label{3.4}
Let $\Lg=\Ls\ltimes V$ with $\Ls=\mathfrak{sl}_2(\C)$, $V=V_m$ and $k\ge 0$. Then we have  
\begin{align*}
H^k(\Lg,\Lg) & \cong \bigoplus_{i+j=k}H^i(\Ls,\C)\otimes H^j(V,\Lg)^{\Ls}\\[0.1cm]
             &  \cong H^k(V,\Lg)^{\Ls}\oplus H^{k-3}(V,\Lg)^{\Ls}.
\end{align*}
In particular we have
\begin{align*}
H^k(\Lg,\Lg) & \cong H^k(V,\Lg)^{\Ls} \text{ for } k=0,1,2.
\end{align*}
\end{prop}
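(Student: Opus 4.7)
The plan is to apply the Hochschild--Serre spectral sequence \cite{HS} associated with the abelian ideal $V \triangleleft \Lg$ and its semisimple quotient $\Ls = \mathfrak{sl}_2(\C)$. The $E_2$-page is
\[
E_2^{p,q} = H^p(\Ls,\, H^q(V,\Lg))\;\Longrightarrow\; H^{p+q}(\Lg,\Lg).
\]
Since $\Ls$ is semisimple and each $H^q(V,\Lg)$ is a finite-dimensional $\Ls$-module, complete reducibility lets me split $H^q(V,\Lg) = H^q(V,\Lg)^\Ls \oplus W_q$, where $W_q$ contains no trivial summand. The standard Casimir argument then shows $H^p(\Ls, W_q) = 0$ for all $p$, while on the trivial part $H^p(\Ls, H^q(V,\Lg)^\Ls) \cong H^p(\Ls,\C)\otimes H^q(V,\Lg)^\Ls$. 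Hence
\[
E_2^{p,q} \cong H^p(\Ls,\C)\otimes H^q(V,\Lg)^\Ls.
\]

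Next I would invoke the classical degeneration of the Hochschild--Serre spectral sequence for a semisimple quotient: all higher differentials $d_r$, $r\ge 2$, are zero because the edge algebra structure together with reductivity of $\Ls$ forces them to vanish (this is exactly the content of the Hochschild--Serre theorem applied to the extension $0\to V\to \Lg\to \Ls\to 0$, as used in \cite{BU75}). Consequently
\[
H^k(\Lg,\Lg) \;\cong\; \bigoplus_{p+q=k} H^p(\Ls,\C)\otimes H^q(V,\Lg)^\Ls,
\]
which is the first isomorphism in the proposition.

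To obtain the second isomorphism, I would use the well-known computation of the Chevalley--Eilenberg cohomology $H^\ast(\mathfrak{sl}_2(\C),\C)$: since $\mathfrak{sl}_2(\C)$ has rank one, Koszul's theorem identifies this cohomology with the exterior algebra on a single primitive generator of degree $3$. Therefore $H^0(\Ls,\C)\cong H^3(\Ls,\C)\cong \C$ and $H^p(\Ls,\C)=0$ for $p\notin\{0,3\}$. Substituting this into the direct sum, only the summands with $(p,q)=(0,k)$ and $(p,q)=(3,k-3)$ survive, giving
\[
H^k(\Lg,\Lg)\;\cong\; H^k(V,\Lg)^\Ls \oplus H^{k-3}(V,\Lg)^\Ls,
\]
with the convention that $H^{k-3}=0$ for $k<3$. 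The special case $k\in\{0,1,2\}$ follows immediately.

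The proof is essentially a direct application of two classical structural results, so there is no serious obstacle; the only subtlety worth flagging is the degeneration of the Hochschild--Serre spectral sequence at $E_2$, but this is standard whenever the quotient Lie algebra is semisimple in characteristic zero.
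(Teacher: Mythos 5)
Your proposal is correct and takes essentially the same route as the paper, which likewise obtains the statement from the Hochschild--Serre factorization theorem of \cite{HS} (as used in \cite{BU75}) for the abelian ideal $V$ with semisimple quotient $\Ls$, combined with Whitehead-type vanishing and $H^p(\mathfrak{sl}_2(\C),\C)\cong\C$ exactly for $p=0,3$. The only informal point is your ``edge algebra'' justification of degeneration --- note that $d_3\colon E_3^{0,q}\to E_3^{3,q-2}$ is not zero for degree reasons alone --- but this is precisely what the cited Hochschild--Serre theorem for a semisimple quotient guarantees, so nothing essential is missing.
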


The idea for computing the cohomology spaces $H^k(\Lg,\Lg)$ is as follows. First we determine
the spaces $H^k(V,V)^{\Ls}$ and $H^k(V,\Lg/V)^{\Ls}$ in Lemma $\ref{3.3}$ by computing the multiplicities
of the irreducible $\Ls$-modules $V$ and $\Ls$ in the exterior product $\Lambda^k(V)$. Then we try to obtain
information on the spaces $H^k(V,\Lg)^{\Ls}$ from the long exact sequence in cohomology in Lemma $\ref{3.2}$.
In the last step we use Proposition $3.4$ to conclude a result on $H^k(\Lg,\Lg)$. \\[0.2cm] 
Let $\Ls=\mathfrak{sl}_2(\C)$. Because of $\Lambda^1(V)\cong V$ we have
\begin{align*}
H^1(V,V)^{\Ls} & \cong \Hom_{\Ls}(V,V)\cong \C, \\
H^1(V,\Lg/V)^{\Ls} & \cong \Hom_{\Ls}(V,\Ls) \cong \begin{cases} \C, \text{ if } V\cong V_2=\Ls \\
0, \text{ otherwise} \end{cases}
\end{align*}
by Schur's Lemma. For $\Lambda^2(V)$ we have the following result.

\begin{lem}\label{3.5}
Let $V_m$ denote the simple $\mathfrak{sl}_2(\C)$-module of dimension $m+1$. Then the decomposition of the
$\mathfrak{sl}_2(\C)$-modules $\Lambda^2(V_{2n-1})$ and $\Lambda^2(V_{2n})$ into simple modules is given by
\begin{align*}
\Lambda^2(V_{2n-1}) & \cong V_0\oplus V_4\oplus V_8\oplus \cdots \oplus V_{4n-4}, \\[0.1cm]
\Lambda^2(V_{2n})  & \cong V_2\oplus V_6\oplus V_{10}\oplus \cdots \oplus V_{4n-2}.
\end{align*}  
\end{lem}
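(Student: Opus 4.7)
The plan is to combine the Clebsch-Gordan decomposition of $V_m \otimes V_m$ with a symmetry analysis of the highest weight vectors, so as to separate the summands that land in $\Sym^2(V_m)$ from those that land in $\Lambda^2(V_m)$. The first step is routine: Clebsch-Gordan for $\mathfrak{sl}_2(\C)$ gives the multiplicity-one decomposition
$V_m \otimes V_m \cong \bigoplus_{j=0}^{m} V_{2m-2j}.$
Since $V_m \otimes V_m = \Sym^2(V_m) \oplus \Lambda^2(V_m)$ as $\mathfrak{sl}_2(\C)$-modules and every simple summand appears exactly once in the tensor square, each copy of $V_{2m-2j}$ must lie entirely in one of the two subspaces, and the task is to decide which one for every $j$.

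To make that decision, I would fix a weight basis $v_0,\ldots,v_m$ of $V_m$ with $v_i$ of weight $m-2i$. For each $j \in \{0,1,\ldots,m\}$, the weight space of weight $2m-2j$ inside $V_m \otimes V_m$ is spanned by $\{v_a \otimes v_{j-a} : 0 \le a \le j\}$, and it contains a unique (up to scalar) highest weight vector $w_j$. Solving the linear recursion imposed by $e_2 \cdot w_j = 0$ yields
$w_j = \sum_{a=0}^{j} (-1)^a\, c_a^{(j)}\, v_a \otimes v_{j-a}$
with coefficients $c_a^{(j)} > 0$ which are palindromic in $a$, that is, $c_a^{(j)} = c_{j-a}^{(j)}$. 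Under the tensor-factor swap $\sigma(x \otimes y) = y \otimes x$, the palindromic symmetry gives $\sigma(w_j) = (-1)^j w_j$ by a direct comparison of coefficients. Hence $V_{2m-2j}$ lies in $\Sym^2(V_m)$ when $j$ is even and in $\Lambda^2(V_m)$ when $j$ is odd.

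Enumerating the odd values of $j$ in the range $1 \le j \le m$ then gives both statements of the lemma. For $m = 2n-1$ the odd values $j = 1,3,\ldots,2n-1$ contribute $V_{4n-4}, V_{4n-8}, \ldots, V_0$, while for $m = 2n$ the odd values $j = 1,3,\ldots,2n-1$ contribute $V_{4n-2}, V_{4n-6}, \ldots, V_2$, matching the asserted decompositions.

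The main obstacle is the middle step: one must actually compute the coefficients $c_a^{(j)}$ from the equation $e_2 \cdot w_j = 0$ and verify that they are palindromic in $a$, which is the content of the classical $\mathfrak{sl}_2(\C)$ Clebsch-Gordan coefficient calculation. A character-theoretic alternative would be to check the identity $\frac{1}{2}\bigl(\chi_{V_m}(q)^2 - \chi_{V_m}(q^2)\bigr) = \sum_{j \text{ odd}} \chi_{V_{2m-2j}}(q)$, but this reduces to essentially the same parity combinatorics.
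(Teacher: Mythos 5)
Your proposal is correct, but it takes a genuinely different route from the paper. The paper's proof is essentially a citation: it uses $V_m\cong \Sym^m(V_1)$, the plethysm $\Lambda^2(\Sym^m(V_1))\cong \Sym^2(\Sym^{m-1}(V_1))$ and the decomposition $\Sym^2(\Sym^{m-1}(V_1))\cong\bigoplus_{k\ge 0}\Sym^{2(m-1)-4k}(V_1)$ (Fulton--Harris, Exercises 11.35 and 11.31), and then reads off the two cases $m=2n-1$ and $m=2n$. You instead work directly inside $V_m\otimes V_m$: Clebsch--Gordan gives the multiplicity-free decomposition, the swap $\sigma$ preserves each isotypic component, so by Schur's lemma it acts there by $\pm 1$ and each $V_{2m-2j}$ lies wholly in $\Sym^2(V_m)$ or $\Lambda^2(V_m)$, and the sign of $\sigma$ on the highest weight vector $w_j$ decides which; this is self-contained and actually explains the parity pattern, at the cost of the one computation you flag, namely the alternating-sign palindromic form of the Clebsch--Gordan coefficients. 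That computation is classical and does come out as you state (for equal factors $\lambda=\mu=m$ the unsigned coefficients are invariant under $a\leftrightarrow j-a$, giving $\sigma(w_j)=(-1)^jw_j$), but if you want to avoid it entirely you can get the multiplicity by weight counting: the weight space of weight $2m-2j$ in $\Lambda^2(V_m)$ is spanned by the $v_a\wedge v_{j-a}$ with $a<j-a$ and has dimension $\lfloor (j+1)/2\rfloor$, so the multiplicity of $V_{2m-2j}$ in $\Lambda^2(V_m)$ is $\lfloor (j+1)/2\rfloor-\lfloor j/2\rfloor$, which is $1$ for $j$ odd and $0$ for $j$ even; the character identity you mention is the same count in generating-function form. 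Your final enumeration of the odd $j$ in the two cases matches the lemma, so the argument is complete once that middle step is written out.
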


\begin{proof}
Let $V_1$ denote the natural  $\mathfrak{sl}_2(\C)$-module. Then we have  $V_m\cong \Sym ^m(V_1)$, and
by \cite{FUH}, Exercises $11.35$ and $11.31$, 
\begin{align*}
\Lambda^2(\Sym^m(V_1)) & \cong \Sym^2(\Sym^{m-1}(V_1) \\
                     & \cong \bigoplus_{k\ge 0} \Sym^{2(m-1)-4k}(V_1).
\end{align*}
For $m=2n-1$ and $m=2n$ we obtain the claim.
\end{proof}  

\begin{cor}\label{3.6}
Let $\Lg=\Ls\ltimes V$ with $\Ls=\mathfrak{sl}_2(\C)$ and $V=V_m$. Then we have
\begin{align*}
 H^2(V,V)^{\Ls} & \cong \begin{cases} \C, \text{ if } m\equiv 2 \bmod 4 \\
  0, \text{ otherwise } \end{cases} \\
H^2(V,\Lg/V)^{\Ls} & \cong \begin{cases} \C, \text{ if } m\equiv 0\bmod 2 \\
0, \text{ if } m\equiv 1 \bmod 2 \end{cases}
\end{align*}
\end{cor}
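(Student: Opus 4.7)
The plan is to reduce both computations to counting multiplicities of irreducible $\Ls$-modules inside $\Lambda^2(V_m)$, using the preceding results as black boxes. By Lemma $\ref{3.3}$ we have
\[
H^2(V,V)^{\Ls}\cong \Hom_{\Ls}(\Lambda^2(V_m),V_m),\qquad H^2(V,\Lg/V)^{\Ls}\cong \Hom_{\Ls}(\Lambda^2(V_m),\Ls),
\]
and since $\Ls=\mathfrak{sl}_2(\C)\cong V_2$ as an $\Ls$-module via the adjoint representation, the second Hom space becomes $\Hom_{\Ls}(\Lambda^2(V_m),V_2)$. By Schur's Lemma each of these Hom spaces has dimension equal to the multiplicity of the respective irreducible target in the decomposition of $\Lambda^2(V_m)$.

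Next I would split into the parity cases supplied by Lemma $\ref{3.5}$. If $m=2n-1$ is odd, then
\[
\Lambda^2(V_m)\cong V_0\oplus V_4\oplus\cdots\oplus V_{4n-4},
\]
whose summands all have index divisible by $4$. In particular neither $V_{2n-1}=V_m$ (which has odd index) nor $V_2$ occurs, so both Hom spaces vanish, which agrees with the claimed values in the cases $m\equiv 1,3\bmod 4$. If $m=2n$ is even, then
\[
\Lambda^2(V_m)\cong V_2\oplus V_6\oplus\cdots\oplus V_{4n-2},
\]
whose summands are exactly the $V_k$ with $k\equiv 2\bmod 4$ and $2\le k\le 4n-2$. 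The module $V_m=V_{2n}$ appears in this list precisely when $2n\equiv 2\bmod 4$, i.e.\ when $m\equiv 2\bmod 4$; this yields the stated value of $H^2(V,V)^{\Ls}$. Meanwhile $V_2$ is always a summand (as the first term), so $\Hom_{\Ls}(\Lambda^2(V_m),V_2)\cong\C$ whenever $m$ is even, which gives the claimed value of $H^2(V,\Lg/V)^{\Ls}$.

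There is no genuine obstacle here: the work has already been done in Lemmas $\ref{3.3}$ and $\ref{3.5}$, and the corollary is merely an exercise in reading off multiplicities from those decompositions. The only thing to keep track of is the identification $\Ls\cong V_2$ and the arithmetic condition $2n\equiv 2\bmod 4 \iff m\equiv 2\bmod 4$ that separates the two halves of the answer for $H^2(V,V)^{\Ls}$ in the even case.
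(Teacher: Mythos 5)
Your proposal is correct and follows essentially the same route as the paper's proof: reduce via Lemma $\ref{3.3}$ to multiplicity counts in $\Lambda^2(V_m)$, then read these off from the decomposition in Lemma $\ref{3.5}$ using Schur's Lemma, splitting on the parity of $m$. The paper's proof is simply a terser version of this argument (it treats the statement for $H^2(V,\Lg/V)^{\Ls}$ with the remark that it ``follows similarly''), so there is nothing substantively different to compare.
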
  

\begin{proof}
If $m=2n-1$ is odd, then
\[
H^2(V,V)^{\Ls}\cong \Hom_{\Ls}(V_0\oplus V_4\oplus \cdots \oplus V_{4n-4},V_{2n-1})=0.
\]  
For $m=2n$ we have
\[
H^2(V,V)^{\Ls}\cong \Hom_{\Ls}(V_2\oplus V_6\oplus \cdots \oplus V_{4n-2},V_{2n}),
\]
which is zero for even $n$, and one-dimensional for odd $n$. The second claim follows similarly.
\end{proof}

In general, there is a formula for the multiplicities of a simple $\mathfrak{sl}_2(\C)$-module $V_{\ell}$ within
$\Sym^ m(V_n)$. It was discovered by Cayley in a letter to Sylvester around $1855$, but only
proved in $1878$ by Sylvester. We use the reference  \cite{HH}, where this formula is
given in Theorem $3.1$ of Section $3$, and is called {\em explicit plethysm}.
Note that the determinant factor there equals one in our case. Furthermore, we have $V_k \cong \Sym(V_1)^k$,
and $\Sym^j(V_k)\simeq \Lambda^j (V_{j+k-1})$. Recall that the Gaussian polynomial
$\genfrac[]{0pt}{1}{j+k}{k}_q$ is defined by
\[
\genfrac[]{0pt}{0}{j+k}{k}_q=\frac{(1-q^{j+k})(1-q^{j+k-1})\cdots (1-q^{j+1})}{(1-q)(1-q^2)\cdots (1-q^k)}.
\]  

\begin{thm}\label{3.7}
There is an isomorphism of $\mathfrak{sl}_2(\C)$-modules
\[
\Lambda^j (V_{j+k-1})\simeq \bigoplus_{n=0}^{\lfloor \frac{jk}{2}\rfloor}(V_{jk-2n})^{\oplus N(j,k,n)},
\]
where $N(j,k,n)$ is the coefficient of $q^n$ in the  Gaussian polynomial $(1-q)\genfrac[]{0pt}{1}{j+k}{k}_q$.
We have
\[
N(j,k,n)=p(j,k,n)-p(j,k,n-1),
\]
where $p(k,j,n)$ is the number of partitions of $n$ into at most $k$ parts, where the largest part is at most $j$.
\end{thm}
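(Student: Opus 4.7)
The plan is to compute the $\mathfrak{sl}_2(\C)$-character of $\Lambda^j(V_{j+k-1})$ explicitly using Gauss's $q$-binomial theorem, and then to read off the multiplicities of the simple constituents $V_{jk-2n}$ from this character by a standard telescoping identity for symmetric Laurent polynomials. Throughout, let $q$ denote the formal torus variable, so that $\chi(V_m) = q^m + q^{m-2} + \cdots + q^{-m}$.

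First, the weights of $V_{j+k-1}$ are $q^{-(j+k-1)}, q^{-(j+k-3)}, \ldots, q^{j+k-1}$, and $\chi(\Lambda^j(V_{j+k-1}))$ equals the elementary symmetric polynomial $e_j$ evaluated at these weights. Factoring $q^{-(j+k-1)}$ out of every argument this becomes $q^{-j(j+k-1)}\, e_j(1, q^2, \ldots, q^{2(j+k-1)})$. By Gauss's $q$-binomial identity applied with $q^2$ in place of $q$,
\[
\prod_{i=0}^{j+k-1}\bigl(1 + tq^{2i}\bigr) \;=\; \sum_{r=0}^{j+k} q^{r(r-1)} \genfrac[]{0pt}{1}{j+k}{r}_{q^2} t^r,
\]
so extracting the coefficient of $t^j$ gives $e_j(1,q^2,\ldots,q^{2(j+k-1)}) = q^{j(j-1)}\genfrac[]{0pt}{1}{j+k}{k}_{q^2}$. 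Combined with the prefactor, the compensation $-j(j+k-1) + j(j-1) = -jk$ yields the clean form
\[
\chi(\Lambda^j(V_{j+k-1})) \;=\; q^{-jk}\genfrac[]{0pt}{1}{j+k}{k}_{q^2}.
\]

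Next, expand $\genfrac[]{0pt}{1}{j+k}{k}_{q^2} = \sum_n p(j,k,n)\, q^{2n}$, where $p(j,k,n)$ counts partitions of $n$ into at most $k$ parts each of size at most $j$. The character is then the palindromic Laurent polynomial $\sum_n p(j,k,n)\, q^{2n-jk}$. For any symmetric Laurent polynomial $f = \sum_m a_m q^m$ decomposed as $f = \sum_{m \geq 0} n_m \chi(V_m)$, one has the telescoping identity $n_m = a_m - a_{m+2}$, since $[q^m]\chi(V_{m'}) = 1$ iff $m' \geq m$ and $m' \equiv m \pmod 2$. Substituting $m = jk - 2n$ gives exactly $n_m = p(j,k,n) - p(j,k,n-1) = N(j,k,n)$, and the range $0 \leq n \leq \lfloor jk/2\rfloor$ comes from requiring $jk - 2n \geq 0$.

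For the second assertion, the standard generating function $\sum_n p(j,k,n)\, q^n = \genfrac[]{0pt}{1}{j+k}{k}_q$ gives, after multiplication by $(1-q)$, the series $\sum_n (p(j,k,n) - p(j,k,n-1))\, q^n$, identifying $N(j,k,n)$ as the coefficient of $q^n$ in $(1-q)\genfrac[]{0pt}{1}{j+k}{k}_q$. The main obstacle is the exponent bookkeeping in the character computation: one must verify that the shift by $q^{-(j+k-1)}$ appearing $j$ times in the $j$th elementary symmetric polynomial combines with the Gauss prefactor $q^{j(j-1)}$ to give precisely the centering $q^{-jk}$ required for $\chi(\Lambda^j(V_{j+k-1}))$ to be invariant under $q \leftrightarrow q^{-1}$; after that, everything reduces to well-known identities for the Gaussian polynomial.
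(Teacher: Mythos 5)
Your argument is correct, but it follows a genuinely different route from the paper. The paper gives no independent proof of this theorem: it obtains the decomposition by citing the Cayley--Sylvester ``explicit plethysm'' formula for the multiplicities in $\Sym^j(V_k)$ (Theorem 3.1 of the reference [HH]) and then transporting it to the exterior power via the Hermite reciprocity isomorphism $\Sym^j(V_k)\simeq\Lambda^j(V_{j+k-1})$. You instead work with $\Lambda^j(V_{j+k-1})$ directly: its character is $e_j$ of the weights of $V_{j+k-1}$, Gauss's $q$-binomial identity gives $e_j(1,q^2,\dots,q^{2(j+k-1)})=q^{j(j-1)}\genfrac[]{0pt}{1}{j+k}{k}_{q^2}$, the exponent bookkeeping yields the centered form $q^{-jk}\genfrac[]{0pt}{1}{j+k}{k}_{q^2}$, and the multiplicities follow from the standard telescoping $n_m=a_m-a_{m+2}$ together with the generating-function interpretation of the Gaussian polynomial. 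This buys a self-contained proof that in effect reproves the needed instance of Cayley--Sylvester and makes Hermite reciprocity unnecessary, at the cost of redoing classical material the paper simply quotes. One small step worth making explicit: the coefficient of $q^{jk-2n}$ in $\sum_{n'}p(j,k,n')\,q^{2n'-jk}$ is literally $p(j,k,jk-n)$, so identifying it with $p(j,k,n)$ uses the palindromicity $p(j,k,n)=p(j,k,jk-n)$ (equivalently the invariance of the character under $q\leftrightarrow q^{-1}$); you invoke this implicitly by calling the Laurent polynomial palindromic, but it deserves a sentence.
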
  

For $j=3$ we obtain the following explicit result.

\begin{prop}\label{3.8}
The multiplicity of the $\mathfrak{sl}_2(\C)$-module $V_2$ in
\[
\Lambda^3(V_{k+2})\simeq \bigoplus_{n=0}^{\lfloor \frac{3k}{2}\rfloor}(V_{3k-2n})^{\oplus N(3,k,n)}
\]
is zero, if $k$ odd. For all even $k\ge 2$ we obtain
\[
N\left(3,k,\frac{3k-2}{2}\right)= \begin{cases} 1, \text{ if $k \equiv 2 \bmod 4$} \\
0, \text{ if $k \equiv 0 \bmod 4$} \end{cases}
\]
\end{prop}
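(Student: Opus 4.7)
The plan is to split on the parity of $k$. When $k$ is odd, every index $3k-2n$ appearing in the decomposition of $\Lambda^3(V_{k+2})$ is odd, whereas $V_2$ has even highest weight; hence $V_2$ cannot appear and the multiplicity vanishes by a parity obstruction. When $k=2m$ is even, the condition $V_{3k-2n}=V_2$ forces $n=3m-1$, and the task reduces to computing $N(3,2m,3m-1)$.

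By Theorem \ref{3.7}, this equals the coefficient of $q^{3m-1}$ in
\[
(1-q)\genfrac[]{0pt}{1}{2m+3}{2m}_q \;=\; \frac{(1-q^{2m+1})(1-q^{2m+2})(1-q^{2m+3})}{(1-q^2)(1-q^3)}.
\]
When the numerator is expanded, the monomials of degree $4m+3$, $4m+4$, $4m+5$, and $6m+6$ all exceed $3m-1$ for $m\ge 1$ and so contribute nothing. Writing $c_n$ for the coefficient of $q^n$ in $1/((1-q^2)(1-q^3))$ (equivalently, the number of pairs $(i,j)\in\Z_{\ge 0}^2$ with $2i+3j=n$, and with $c_n=0$ for $n<0$), the problem reduces to showing
\[
c_{3m-1}-c_{m-2}-c_{m-3}-c_{m-4} \;=\; \begin{cases} 1 & \text{if $m$ is odd,}\\ 0 & \text{if $m$ is even.}\end{cases}
\]

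The decisive step will be to collapse the three subtracted terms via the identity
\[
(q^2+q^3+q^4)\cdot\frac{1}{(1-q^2)(1-q^3)} \;=\; \frac{q^2}{(1-q)(1-q^2)},
\]
which follows from $1+q+q^2=(1-q^3)/(1-q)$. The right-hand side has $q^m$-coefficient $\lfloor m/2\rfloor$, so $c_{m-2}+c_{m-3}+c_{m-4}=\lfloor m/2\rfloor$. A short check on residue classes modulo $6$ then gives $c_{3m-1}=(m+1)/2$ for $m$ odd (since $3m-1\equiv 2\pmod 6$) and $c_{3m-1}=m/2$ for $m$ even (since $3m-1\equiv 5\pmod 6$); subtracting $\lfloor m/2\rfloor$ yields the dichotomy $1$ versus $0$, and translating back via $k=2m$ gives the congruences $k\equiv 2\pmod 4$ and $k\equiv 0\pmod 4$.

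The main obstacle is recognising the telescoping identity above. Without it, the three individual coefficients $c_{m-2}$, $c_{m-3}$, $c_{m-4}$ depend on $m\bmod 6$, and the claimed answer (which depends only on $m\bmod 2$) would require an unpleasant six-case analysis; the identity is precisely what ensures the $m\bmod 3$ dependence cancels, leaving only a parity distinction.
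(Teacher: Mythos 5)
Your proof is correct, and it takes a genuinely different route from the paper. The paper disposes of odd $k$ by the same parity remark you make, but for even $k$ it imports the closed-form ``explicit plethysm'' expression (Theorem 1.3, Formula (1.8) of \cite{HH}) together with the symmetry $N(j,k,n)=N(k,j,n)$, and then just evaluates floor functions separately for $k=4r$ and $k=4r+2$. You instead work directly from Theorem \ref{3.7}: writing the Gaussian polynomial in its three-factor form $(1-q)\genfrac[]{0pt}{1}{2m+3}{2m}_q=\frac{(1-q^{2m+1})(1-q^{2m+2})(1-q^{2m+3})}{(1-q^2)(1-q^3)}$ (note this already uses the same $q$-binomial symmetry the paper invokes as $N(j,k,n)=N(k,j,n)$), expanding the numerator, discarding the high-degree terms, and collapsing $c_{m-2}+c_{m-3}+c_{m-4}$ to $\lfloor m/2\rfloor$ via $(q^2+q^3+q^4)\cdot\frac{1}{(1-q^2)(1-q^3)}=\frac{q^2}{(1-q)(1-q^2)}$; the remaining evaluation $c_{3m-1}=\lfloor(3m-1)/6\rfloor+1$ (since $3m-1\not\equiv 1 \bmod 6$) gives $(m+1)/2$ or $m/2$ according to the parity of $m$, and the dichotomy follows. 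I checked the identity and the residue computation, and the answer agrees with the small cases $k=2,4,6$ visible in the paper's examples. What your approach buys is self-containedness: it needs nothing beyond Theorem \ref{3.7} and elementary generating-function manipulation, and the telescoping identity explains conceptually why the answer depends only on $k \bmod 4$ rather than on $k \bmod 12$; the paper's route is shorter on the page but only because the combinatorial work is delegated to the cited formula in \cite{HH}.
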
  

\begin{proof}
The multiplicity is zero, in case there is no nonnegative integer $n$ with $3k-2n=2$. If $k$ is odd,
then there is no such $n$.
Hence let $k$ be even. Assume first that $k=4r$. Then, by Theorem $1.3$, Formula $(1.8)$ 
in \cite{HH}, and since $N(j,k,n)=N(k,j,n)$,  we have
\begin{align*}
N\left(3,k,\frac{3k-2}{2}\right) & = N(4r,3,6r-1) \\
                & = \left\lfloor \frac{6r-1}{2} \right\rfloor - \left\lfloor \frac{6r-2}{3}\right \rfloor
                  -\left\lfloor \frac{2r-2}{2}\right\rfloor -1 \\
                & = (3r-1)-(2r-1)-(r-1)-1 \\
                & = 0.
\end{align*}
A similar computation for $k=4r+2$ gives
\begin{align*}
N\left(3,k,\frac{3k-2}{2}\right) & = N(4r+2,3,6r+2) \\
  & = (3r+1)-2r-(r-1)-1 \\
  &  =1.
\end{align*}
\end{proof}  

\begin{cor}\label{3.9}
Let $\Lg=\Ls\ltimes V$ with $\Ls=\mathfrak{sl}_2(\C)$ and $V=V_m$ for $m\ge 2$. Then we have
\[
H^3(V,\Lg/V)^{\Ls}\cong \Hom_{\Ls}(\Lambda^3(V),V_2) \cong \begin{cases} \C, \text{ if $m \equiv 0 \bmod 4$} \\
0, \text{ otherwise } \end{cases}
\]
\end{cor}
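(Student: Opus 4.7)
The plan is to chain together Lemma \ref{3.3}, Theorem \ref{3.7}, and Proposition \ref{3.8}, with only a small bookkeeping step to translate the parity conditions on the auxiliary variable $k$ into congruence conditions on $m$.

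First I would apply Lemma \ref{3.3} directly: since $\Lg/V \cong \Ls = \mathfrak{sl}_2(\C) \cong V_2$ as an $\Ls$-module, we obtain
\[
H^3(V,\Lg/V)^{\Ls} \cong \Hom_{\Ls}(\Lambda^3(V_m), V_2).
\]
By Schur's lemma this Hom-space has dimension equal to the multiplicity of the simple summand $V_2$ in $\Lambda^3(V_m)$.

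Next I would invoke Theorem \ref{3.7} with $j = 3$, so that $j+k-1 = m$ forces $k = m-2$ (this is where the hypothesis $m \ge 2$ enters). Then $\Lambda^3(V_m) \cong \bigoplus_n (V_{3k-2n})^{\oplus N(3,k,n)}$, and the summand $V_2$ appears exactly when $3k - 2n = 2$, i.e.\ $n = (3k-2)/2$. For this $n$ to be a nonnegative integer, $k$ must be even, and the multiplicity is precisely $N\bigl(3,k,(3k-2)/2\bigr)$. In particular, when $k$ is odd---equivalently when $m$ is odd---the multiplicity vanishes, which covers two of the three residue classes mod $4$.

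For the remaining case, I would simply read off Proposition \ref{3.8}: for even $k$, one has $N(3,k,(3k-2)/2) = 1$ if $k \equiv 2 \bmod 4$, and $0$ if $k \equiv 0 \bmod 4$. Translating via $m = k+2$, the value $1$ occurs when $m \equiv 0 \bmod 4$ and the value $0$ when $m \equiv 2 \bmod 4$. Combined with the odd case above, this yields exactly the stated dichotomy. There is no real obstacle here; the only thing to be careful about is the index shift $k = m-2$ when going from the formulation in Theorem \ref{3.7} and Proposition \ref{3.8} (which are stated in terms of $k$) to the formulation of the corollary (which is stated in terms of $m$).
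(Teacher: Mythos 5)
Your proposal is correct and follows exactly the route the paper intends for this corollary: Lemma \ref{3.3} identifies $H^3(V,\Lg/V)^{\Ls}$ with $\Hom_{\Ls}(\Lambda^3(V_m),V_2)$, Theorem \ref{3.7} with $j=3$, $k=m-2$ locates the possible $V_2$-summand, and Proposition \ref{3.8} supplies the multiplicity, with the index shift $m=k+2$ turning $k\equiv 2\bmod 4$ into $m\equiv 0\bmod 4$. The only marginal case is $m=2$ (i.e.\ $k=0$, outside the range of Proposition \ref{3.8}), which your argument already covers since $(3k-2)/2$ is then negative and no $V_2$-summand can occur.
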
  

\begin{ex}
The decomposition of $\Lambda^3(V_{k+2})$ into simple $\mathfrak{sl}_2(\C)$-modules for $0\le k\le 6$ is given
as follows.
\begin{align*}
\Lambda^3(V_2) &  \cong V_0\\
\Lambda^3(V_3) &  \cong V_3\\
\Lambda^3(V_4) &  \cong V_2\oplus V_6\\
\Lambda^3(V_5) &  \cong V_3\oplus V_5\oplus V_9\\
\Lambda^3(V_6) &  \cong V_0\oplus V_4\oplus V_6\oplus V_8\oplus V_{12}\\
\Lambda^3(V_7) &  \cong V_3\oplus V_5\oplus V_7\oplus V_{11}\oplus V_{13}\oplus V_{15}\\
\Lambda^3(V_8) &  \cong V_2\oplus V_7\oplus V_7\oplus V_8\oplus V_{10}\oplus V_{12}\oplus V_{14}\oplus V_{18}\\
\end{align*}  
\end{ex}

The multiplicity of $V_m$ in $\Lambda^3(V_m)$ has been determined in \cite{BRE}, section $2$.

\begin{prop}\label{3.11}
Let $\Ls=\mathfrak{sl}_2(\C)$ and $V=V_m$ be the irreducible $\Ls$-module of dimension $m+1$. If $m=6q+r$,
with $0\le r\le 5$, then
\[
\dim \Hom_{\Ls}(\Lambda^3(V),V)=  \begin{cases} q, \text{ if $r=0,1,2,4$} \\
q+1, \text{ if $r=3,5$} \end{cases}
\]  
\end{prop}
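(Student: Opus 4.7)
The plan is to apply Theorem~\ref{3.7} with $j = 3$ and $k = m - 2$, which gives
\[
\Lambda^3(V_m) \cong \bigoplus_{n=0}^{\lfloor 3(m-2)/2 \rfloor} V_{3(m-2)-2n}^{\oplus N(3,m-2,n)}.
\]
The summand isomorphic to $V_m$ appears precisely when $3(m-2) - 2n = m$, i.e.\ for $n = m - 3$. Hence the quantity we want equals
\[
N(3, m-2, m-3) \;=\; p(3, m-2, m-3) - p(3, m-2, m-4)
\]
by Theorem~\ref{3.7}.

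The second step is to observe that the bound on the number of parts is vacuous in this range. Indeed, a partition of $n$ into positive parts uses at most $n$ parts, and $m-3 < m-2$, so any partition of $m-3$ or $m-4$ with parts $\le 3$ automatically has at most $m-2$ parts. Writing $p(n)$ for the number of partitions of $n$ into parts taken from $\{1,2,3\}$, we obtain $p(3,m-2,m-3)=p(m-3)$ and $p(3,m-2,m-4)=p(m-4)$, so the calculation reduces to evaluating $p(m-3) - p(m-4)$.

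In the final step I would derive a closed form for $p(n)$ from the generating function
\[
\sum_{n\ge 0}p(n)\,q^n = \frac{1}{(1-q)(1-q^2)(1-q^3)},
\]
either by partial fractions, or equivalently by parameterizing each partition of $n = 6a+s$ (with $0 \le s \le 5$) by its multiplicities of $1$, $2$, $3$ and counting directly. This produces a quadratic in $a$ whose linear and constant terms depend on $s \in \{0,\dots,5\}$. Substituting $m = 6q+r$ and forming the difference $p(m-3)-p(m-4)$ then gives exactly $q$ for $r\in\{0,1,2,4\}$ and $q+1$ for $r\in\{3,5\}$, matching the statement.

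The main obstacle is the mechanical sixfold case analysis, which is routine but must be carried out carefully since the answer is not uniform in $r$. A parallel alternative, mirroring the proof of Proposition~\ref{3.8}, would be to plug $(k,j,n)=(m-2,3,m-3)$ directly into the Cayley--Sylvester floor formula (1.8) of \cite{HH}; this avoids the generating-function calculation but leads to the same split on $m \bmod 6$.
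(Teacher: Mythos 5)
Your proposal is correct, and it takes a genuinely different route from the paper: the paper gives no argument for Proposition~\ref{3.11} at all, but simply quotes the multiplicity of $V_m$ in $\Lambda^3(V_m)$ from \cite{BRE}, Section~2. Your derivation --- specialize Theorem~\ref{3.7} to $j=3$, $k=m-2$, locate the summand $V_m$ at $n=m-3$, observe that the bound of at most $m-2$ parts is vacuous for partitions of $m-3$ and $m-4$, and evaluate $p_{\le 3}(m-3)-p_{\le 3}(m-4)$, where $p_{\le 3}(n)$ counts partitions of $n$ into parts at most $3$ --- is sound and self-contained, and it is in the same spirit as the paper's own proofs of Propositions~\ref{3.8} and~\ref{3.13}, just applied to multiplicity of $V_m$ instead of $V_2$. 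What it buys is independence from the external reference, at the cost of the final case analysis; but that last step, which you leave as a ``routine'' sixfold computation, can be done without any quasi-polynomial for $p_{\le 3}$: removing a part equal to $1$ gives a bijection showing that $p_{\le 3}(n)-p_{\le 3}(n-1)$ equals the number of partitions of $n$ into parts $2$ and $3$, i.e.\ the number of pairs $(a,b)\ge 0$ with $2a+3b=n$, which is $\lfloor n/6\rfloor+1$ unless $n\equiv 1 \pmod 6$, in which case it is $\lfloor n/6\rfloor$. Substituting $n=m-3$ with $m=6q+r$ immediately gives $q$ for $r\in\{0,1,2,4\}$ and $q+1$ for $r\in\{3,5\}$, as claimed (and the small cases $m\le 2$ are trivially consistent). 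This shortcut is exactly the device used in the paper's proof of Proposition~\ref{3.13}, where a difference of restricted partition numbers is identified with partitions avoiding the part $1$. Your alternative via formula $(1.8)$ of \cite{HH} would also work and mirrors the paper's proof of Proposition~\ref{3.8}, but it is not what the paper does for Proposition~\ref{3.11}.
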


For $m=2n$ we have $2n=6q+r$, so $r$ can only be $0,2,4$. We obtain $n=3q+s$ with $s=0,1,2$, i.e.,
$q=\lfloor \frac{n}{3}\rfloor$. This gives the following result.

\begin{cor}\label{3.12}
Let $\Ls=\mathfrak{sl}_2(\C)$ and $V=V_m$. Then we have
\[ 
H^3(V,V)^{\Ls}   \cong  \Hom_{\Ls}(\Lambda^3(V),V)\cong \begin{cases} \C^{\lfloor \frac{n}{3}\rfloor},
\hspace*{0.34cm} \text{ if }m=2n \\  
\C^{\lfloor \frac{n+1}{3}\rfloor}, \text{ if } m=2n-1.  \end{cases}
\]
\end{cor}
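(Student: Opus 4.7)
The plan is straightforward, given the results already on the table. The first isomorphism $H^3(V,V)^{\Ls} \cong \Hom_{\Ls}(\Lambda^3(V), V)$ is immediate from Lemma \ref{3.3} applied with $k=3$. The entire content of the corollary therefore reduces to translating the dimension formula of Proposition \ref{3.11}, which is indexed by the residue of $m$ modulo $6$, into the indexing by $n$ used in the statement. The main obstacle, such as it is, is purely bookkeeping: one must match up the quotient $q$ appearing in $m = 6q+r$ with the floor expressions $\lfloor n/3\rfloor$ and $\lfloor (n+1)/3\rfloor$.

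First I would handle the even case $m = 2n$. Here the residue $r$ in $m = 6q + r$ must lie in $\{0, 2, 4\}$, and Proposition \ref{3.11} yields $\dim \Hom_{\Ls}(\Lambda^3(V), V) = q$ in every one of these subcases. Writing $n = 3q + s$ with $s \in \{0,1,2\}$ corresponding to $r = 0, 2, 4$ respectively, one reads off $\lfloor n/3 \rfloor = q$, which gives exactly the claimed dimension $\lfloor n/3 \rfloor$.

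For the odd case $m = 2n - 1$, the residue $r$ must lie in $\{1, 3, 5\}$, and I would dispatch the three subcases in turn. If $r = 1$, then $n = 3q + 1$, Proposition \ref{3.11} gives dimension $q$, and $\lfloor (n+1)/3 \rfloor = \lfloor (3q+2)/3 \rfloor = q$. If $r = 3$, then $n = 3q + 2$, the dimension is $q + 1$, and $\lfloor (n+1)/3 \rfloor = \lfloor (3q+3)/3 \rfloor = q + 1$. Finally if $r = 5$, then $n = 3q + 3$, the dimension is again $q + 1$, and $\lfloor (n+1)/3 \rfloor = \lfloor (3q+4)/3 \rfloor = q + 1$. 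In every case the two expressions agree, completing the verification.

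There is no genuine mathematical obstacle here, and no further input from the theory of $\mathfrak{sl}_2(\C)$-representations is required beyond what Proposition \ref{3.11} already supplies. The only pitfall is an off-by-one in the floor calculations, which is why I would recommend writing out the six subcases explicitly rather than trying to give a single uniform formula.
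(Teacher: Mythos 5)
Your proposal is correct and follows essentially the same route as the paper: the first isomorphism comes from Lemma \ref{3.3} with $k=3$, and the dimension count is obtained by rewriting the $m=6q+r$ indexing of Proposition \ref{3.11} in terms of $n$ (the paper spells out only the even case $m=2n$, $q=\lfloor n/3\rfloor$, leaving the odd case analogous, which you have verified explicitly). Your six-subcase bookkeeping, including the odd residues $r=1,3,5$, checks out without error.
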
  

We also study the case $j=4$ in Theorem $\ref{3.7}$ in more detail. We obtain a result on the multiplicity of
$V_{2\ell}$ in the exterior product $\Lambda^4(V_{k+3})$ for all $k\ge 1$.

\begin{prop}\label{3.13}
Let $\ell,k \ge 1$. Then the multiplicity of the $\mathfrak{sl}_2(\C)$-module $V_{2\ell}$ in $\Lambda^4(V_{k+3})$ is given by
\[
N(4,k,2k-\ell)=c_{2k-\ell}-c_{2k-2\ell+1},
\]
where the $c_i$ are the coefficients in the power series
\[
C(x)=\frac{1}{(1-x^2)(1-x^3)(1-x^4)}=\sum_{i\ge 0}c_ix^i.
\]
\end{prop}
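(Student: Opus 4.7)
The plan is to apply Theorem~\ref{3.7} with $j = 4$, which identifies $N(4,k,n)$ with the coefficient of $q^n$ in
$$
(1-q)\genfrac[]{0pt}{0}{k+4}{4}_q
  = \frac{(1-q^{k+1})(1-q^{k+2})(1-q^{k+3})(1-q^{k+4})}{(1-q^2)(1-q^3)(1-q^4)}
  = C(q)\prod_{i=1}^{4}(1-q^{k+i}).
$$
Expanding the product, any product of two or more of the factors $q^{k+i}$ has degree at least $(k+1)+(k+2) = 2k+3$. Since we want $[q^{2k-\ell}]$ and $2k-\ell < 2k+3$ for every $\ell \ge 0$, those higher-order terms do not contribute. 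Only the constant $1$ and the four linear terms $-q^{k+i}$ survive, giving
$$
N(4,k,2k-\ell) \;=\; c_{2k-\ell} - \sum_{i=1}^{4} c_{k-\ell-i}.
$$

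The remaining task is to collapse the four-term sum into a single coefficient of $C$; setting $m = k-\ell$, the required identity is
$$
c_{m-1}+c_{m-2}+c_{m-3}+c_{m-4} \;=\; c_{2m+1} \quad (m \in \Z,\ c_i=0 \text{ for } i<0).
$$
I would prove this by generating functions. Using $1+q+q^2+q^3 = (1-q^4)/(1-q)$ one obtains
$$
(1+q+q^2+q^3)\,C(q) \;=\; \frac{1-q^4}{1-q}\cdot\frac{1}{(1-q^2)(1-q^3)(1-q^4)} \;=\; \frac{1}{(1-q)(1-q^2)(1-q^3)},
$$
so $\sum_{i=1}^{4} c_{m-i}$ is the coefficient of $q^{m-1}$ in $\frac{1}{(1-q)(1-q^2)(1-q^3)}$. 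Separately, extracting the odd part of $C(q)$,
$$
C(q) - C(-q) \;=\; \frac{1}{(1-q^2)(1-q^4)}\Bigl(\frac{1}{1-q^3}-\frac{1}{1+q^3}\Bigr) \;=\; \frac{2q^3}{(1-q^2)(1-q^4)(1-q^6)};
$$
dividing by $2q$ and substituting $q^2 \mapsto x$ (the expression is a function of $q^2$) gives $\sum_{m \ge 0} c_{2m+1}\,x^m = \frac{x}{(1-x)(1-x^2)(1-x^3)}$, matching the previous display and establishing the identity.

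The genuine obstacle is spotting this collapse $\sum_{i=1}^{4} c_{m-i} = c_{2m+1}$; once it is in hand, everything else is bookkeeping with Gaussian polynomials. A more conceptual route is a bijection: any partition of $2m+1$ with parts in $\{2,3,4\}$ must contain an odd number $2B+1$ of $3$'s by parity; removing one $3$ and halving the remainder produces a partition of $m-1$ with parts in $\{1,2,3\}$, which proves the same identity combinatorially.
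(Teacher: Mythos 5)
Your proof is correct, but the route differs from the paper's in its first half. The paper stays on the partition-counting side: it writes $N(4,k,2k-\ell)=p(4,k,2k-\ell)-p(4,k,2k-\ell-1)$, applies Andrews' recurrence $p(j,k,n)=p(j,k-1,n)+p(j-1,k,n-k)$ together with a combinatorial splitting (partitions whose smallest part equals $1$ versus partitions of $2k-\ell$ with parts in $\{2,3,4\}$, which automatically have at most $k$ parts) to reach $N(4,k,2k-\ell)=c_{2k-\ell}-p(3,k,k-\ell-1)$, and only then invokes the odd-part identity for $C$. You instead expand $(1-q)\genfrac[]{0pt}{1}{k+4}{4}_q=C(q)\prod_{i=1}^4(1-q^{k+i})$ and note that in degree $2k-\ell\le 2k-1$ only the constant and linear terms of the product contribute, giving the uniform intermediate formula $c_{2k-\ell}-\sum_{i=1}^4 c_{k-\ell-i}$; the ``at most $k$ parts'' bookkeeping that the paper handles combinatorially is absorbed into your degree truncation, and no external partition identity is needed. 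The closing step is essentially the same in both proofs: your collapse $\sum_{i=1}^4 c_{m-i}=c_{2m+1}$ is, after multiplying $C(q)$ by $1+q+q^2+q^3$, exactly the paper's identity $d_m=c_{2m+3}$ for $D(x)=1/((1-x)(1-x^2)(1-x^3))$, proved by the same odd-part extraction of $C$. Your version is self-contained and yields the four-term formula as a by-product; only the bijective aside is slightly loose, since ``halving the remainder'' requires first pairing the remaining (even number of) $3$'s into $6$'s, or equivalently arguing via the count $a+3B+2c=m-1$ for a partition of $2m+1$ with $a$ twos, $2B+1$ threes and $c$ fours.
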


\begin{proof}
By Theorem $\ref{3.7}$ we have
 \[
\Lambda^4(V_{k+3})\simeq \bigoplus_{n=0}^{2k}(V_{4k-2n})^{\oplus N(4,k,n)}.
\]
For $4k-2n=2\ell$, i.e., for $n=2k-\ell$ we obtain the summand  $(V_{2\ell})^{\oplus N(4,k,2k-\ell)}$. We have
\[
N(4,k,2k-\ell)=p(4,k,2k-\ell)-p(4,k,2k-\ell-1).
\]  
Note that we have the identity
\[
p(j,k,n)=p(j,k-1,n)+p(j-1,k,n-k),
\]  
see Equation $(3.2.6)$ in the proof of Theorem $3.1$ in \cite{AND}. This yields
\[
p(4,k,2k-\ell-1)=p(4,k-1,2k-\ell-1)+p(3,k,k-\ell-1).
\]
The number $p(4,k,2k-\ell)$ counts the partitions into maximal $k$ parts of size at most $4$.
Here $p(4,k-1,2k-\ell-1)$ counts those partitions among them, whose smallest part is equal to one. Hence
\[
p(4,k,2k-\ell)=p(4,k-1,2k-\ell-1)+P(2k-\ell),
\]
where $P(2k-\ell)$ is the number of partitions of $k-\ell-1$ with parts in $\{2,3,4\}$, so not having a part
equal to one. Such a  partition automatically cannot have more than $k$ parts. Together we obtain
\[
N(4,k,2k-\ell)= P(2k-\ell)-p(3,k,k-\ell-1).
\]  
The coefficients $c_m$ in $C(x)$ equal $P(m)$, because the parts are in $\{2,3,4\}$. This yields $P(2k-\ell)=c_{2k-\ell}$.
It remains to show that $p(3,k,k-\ell-1)=c_{2k-2\ell+1}$. It is clear that $p(3,k,k-\ell-1)$ is the coefficient $d_m$ in
\[
D(x)=\frac{1}{(1-x)(1-x^2)(1-x^3)}=\sum_{m\ge 0} d_mx^m
\]  
for $m=k-\ell-1$. Because 
\[
\frac{C(x)-C(-x)}{2}=\frac{x^3}{(1-x^2)(1-x^4)(1-x^6)}
\]
is the series with only odd coefficients $c_{2m+1}$, we see that
\[
\sum_{m\ge 0} c_{2m+3}x^m= \frac{1}{(1-x)(1-x^2)(1-x^3)}  =\sum_{m\ge 0} d_mx^m.
\]
In particular, $d_m=c_{2m+3}$. For $m=k-\ell-1$ this gives
\[
p(3,k,k-\ell-1)=d_{k-\ell-1}=c_{2(k-\ell-1)+3}=c_{2k-2\ell+1}.
\]  
\end{proof}

For $\ell=1$ we have $c_{2k-\ell}-c_{2k-2\ell+1}=0$. This yields the following corollary.

\begin{cor}\label{3.14}
The adjoint $\mathfrak{sl}_2(\C)$-module $V_2$ does not occur in $\Lambda^4(V_{k+3})$ for all $k\ge 1$, i.e., we have
\[
N(4,k,2k-1)=p(4,k,2k-1)-p(4,k,2k-2)=0.
\]
\end{cor}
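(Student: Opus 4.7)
The proof is essentially a one-line specialization of Proposition \ref{3.13}. Setting $\ell = 1$ in the formula
\[
N(4,k,2k-\ell) = c_{2k-\ell} - c_{2k-2\ell+1},
\]
the two indices on the right-hand side coincide: $2k - \ell = 2k - 1$ and $2k - 2\ell + 1 = 2k - 1$. Hence $N(4,k,2k-1) = c_{2k-1} - c_{2k-1} = 0$. This is the numerical coincidence that the corollary is isolating.

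The plan is then to translate this vanishing into the second equality stated in the corollary. Using the identity $N(j,k,n) = p(j,k,n) - p(j,k,n-1)$ from Theorem \ref{3.7}, the equation $N(4,k,2k-1) = 0$ becomes $p(4,k,2k-1) = p(4,k,2k-2)$, as required.

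Since all the heavy lifting has been carried out in Proposition \ref{3.13}, no real obstacle remains; the argument is purely a substitution. As a sanity check, one can verify small cases directly: for $k = 2$, the partitions of $3$ into at most $2$ parts of size at most $4$ are $\{3,\ 2+1\}$ (two partitions), while the partitions of $2$ subject to the same constraints are $\{2,\ 1+1\}$ (also two partitions), in agreement with the identity. A fully combinatorial proof via an explicit bijection between these two sets of partitions would also be feasible but would not be shorter than simply invoking Proposition \ref{3.13}.
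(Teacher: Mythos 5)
Your argument is correct and is exactly the paper's own: the corollary is obtained by setting $\ell=1$ in Proposition \ref{3.13}, where the two indices $2k-\ell$ and $2k-2\ell+1$ coincide, so $N(4,k,2k-1)=c_{2k-1}-c_{2k-1}=0$, and the restatement via $N(j,k,n)=p(j,k,n)-p(j,k,n-1)$ is immediate. Your small-case check for $k=2$ is consistent with the paper's example $\Lambda^4(V_5)\cong V_0\oplus V_4\oplus V_8$, which contains no $V_2$.
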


This result is remarkable. It is also surprising that these partition numbers for $2k-1$ and $2k-2$ are equal.
We can now determine the spaces $H^4(V,\Lg/V)^{\Ls}$ for our family of Lie algebras.

\begin{prop}\label{3.15}
Let $\Lg=\Ls\ltimes V$ with $\Ls=\mathfrak{sl}_2(\C)$ and $V=V_m$ for $m\ge 3$. Then we have
\[
H^4(V,\Lg/V)^{\Ls}=\Hom_{\Ls}(\Lambda^4(V),V_2) = 0.
\]
\end{prop}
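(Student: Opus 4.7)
The proof is essentially a direct corollary of the earlier results, so the plan is to assemble it carefully.

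First, I would apply Lemma \ref{3.3} to rewrite
\[
H^4(V,\Lg/V)^{\Ls}\cong \Hom_{\Ls}(\Lambda^4(V_m),\Ls).
\]
Since $\Ls=\mathfrak{sl}_2(\C)$ is the adjoint module, it is isomorphic to $V_2$ as an $\Ls$-module. By Schur's Lemma, the dimension of this Hom-space then equals the multiplicity of $V_2$ in $\Lambda^4(V_m)$, so the task reduces to showing that $V_2$ does not appear as a simple summand of $\Lambda^4(V_m)$ for any $m\ge 3$.

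For the generic range $m\ge 4$, I would set $k=m-3\ge 1$ so that $V_m=V_{k+3}$, which is exactly the form required for Theorem \ref{3.7} with $j=4$. Under the correspondence $4k-2n=2\ell$ we have $n=2k-\ell$; choosing $\ell=1$ (so as to pick out the $V_2$-isotype), the multiplicity of $V_2$ in $\Lambda^4(V_{k+3})$ is $N(4,k,2k-1)$. But this is precisely the quantity shown to vanish in Corollary \ref{3.14}, which was itself a consequence of the partition identity obtained in Proposition \ref{3.13}. Hence the multiplicity is zero whenever $k\ge 1$.

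The only remaining case is the boundary $m=3$, which falls outside the range $k\ge 1$ of Corollary \ref{3.14} and should be handled separately. Since $\dim V_3=4$, the space $\Lambda^4(V_3)$ is one-dimensional; as the top exterior power of an $\mathfrak{sl}_2(\C)$-module it carries the trivial action (equivalently, the determinant of any $SL_2(\C)$-representation is trivial), so $\Lambda^4(V_3)\cong V_0$ and $\Hom_{\Ls}(V_0,V_2)=0$. There is no real obstacle here since all the hard combinatorial work was already packaged in Proposition \ref{3.13} and Corollary \ref{3.14}; the only care needed is in verifying the edge case $m=3$ and in matching the indexing between $V_m$ and $V_{k+3}$.
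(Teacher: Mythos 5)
Your proof is correct and follows exactly the route the paper intends: Lemma \ref{3.3} together with $\Ls\cong V_2$ reduces the statement to the vanishing of the multiplicity of $V_2$ in $\Lambda^4(V_m)$, which is Corollary \ref{3.14} (via Proposition \ref{3.13} with $\ell=1$) for $m\ge 4$. Your separate treatment of the edge case $m=3$, where $\Lambda^4(V_3)\cong V_0$, is a nice touch of care that the paper only covers implicitly through its list of examples.
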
  

\begin{ex}
Let us denote $V_m\oplus V_m$ by $2V_m$. The decomposition of $\Lambda^4(V_{k+3})$ into simple $\mathfrak{sl}_2(\C)$-modules
for $0\le k\le 5$ is given as follows.
\begin{align*}
\Lambda^4(V_3) &  \cong V_0\\
\Lambda^4(V_4) &  \cong V_4\\
\Lambda^4(V_5) &  \cong V_0\oplus V_4\oplus V_8\\
\Lambda^4(V_6) &  \cong V_0\oplus V_4\oplus V_6\oplus V_8\oplus V_{12}\\
\Lambda^4(V_7) &  \cong V_0\oplus 2V_4\oplus 2V_8\oplus V_{10}\oplus V_{12}\oplus V_{16}\\
\Lambda^4(V_8) &  \cong V_0\oplus 2V_4\oplus V_6\oplus 2V_8\oplus V_{10}\oplus 2V_{12}\oplus V_{14}\oplus V_{16}\oplus V_{20}\\
\end{align*}  
\end{ex}

The multiplicity of $V_m$ in $\Lambda^4(V_m)$ has been determined in \cite{BRE}.

\begin{prop}\label{3.17}
Let $\Ls=\mathfrak{sl}_2(\C)$ and $V=V_m$. Then we have
\[ 
H^4(V,V)^{\Ls}  \cong \Hom_{\Ls}(\Lambda^4(V),V)\cong \begin{cases} 0, \hspace*{0.77cm} \text{ if } m\equiv 1\bmod 2 \\  
\C^{f(m)}, \text{ if } m\equiv 0 \bmod 2.  \end{cases}
\]
Here $f(m)$ is a quadratic function in $m$ depending on $r$, where $m=24q+r$ with $0\le r<24$.
\end{prop}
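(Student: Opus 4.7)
The plan is to identify $f(m)$ with the multiplicity $N(4,m-3,3m/2-6)$ computed in Proposition $\ref{3.13}$, and then to extract its quasi-polynomial structure in $m$.

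First I would substitute $V=V_m=V_{k+3}$ with $k=m-3$ into Theorem $\ref{3.7}$. Every simple summand of $\Lambda^4(V_{k+3})$ has the form $V_{4k-2n}$ and thus an even highest weight. So if $m$ is odd, $V_m$ cannot occur as a summand, and $\Hom_{\Ls}(\Lambda^4(V),V)=0$. This settles the case $m$ odd. For even $m=2\ell$, the equation $4k-2n=m$ forces $n=3m/2-6$. Substituting $(k,\ell)=(m-3,m/2)$ into Proposition $\ref{3.13}$ then gives
\[
f(m) \;=\; c_{3m/2-6}\;-\;c_{m-5},
\]
where $c_i$ denotes the coefficient of $x^i$ in $C(x)=1/((1-x^2)(1-x^3)(1-x^4))$. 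It therefore suffices to control the coarse $n$-dependence of $c_n$.

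To do so I would carry out a partial fraction decomposition of $C(x)$. The only pole of order three is at $x=1$ (from the factor $(1-x)^3$ hidden in the denominator), and it contributes a polynomial of degree two in $n$ to $c_n$. The remaining poles all lie at roots of unity of orders $2$, $3$, $4$, and their contributions to $c_n$ depend only on $n$ modulo $\mathrm{lcm}(2,3,4)=12$. Consequently one obtains an expansion
\[
c_n \;=\; P(n)\;+\;g(n\bmod 12)
\]
for an explicit quadratic polynomial $P$ and an explicit function $g\colon\Z/12\Z\to\Q$.

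Substituting back into $f(m)=c_{3m/2-6}-c_{m-5}$, the polynomial parts combine into a single fixed quadratic polynomial in $m$. The two periodic contributions together depend only on $m$ modulo $24$: the period of $g(3m/2-6)$ in the even variable $m$ is $8$ (since $3\cdot 8/2=12\equiv 0\bmod 12$ is the smallest such even jump), that of $g(m-5)$ is $12$, and $\mathrm{lcm}(8,12)=24$. Hence for each even residue $r=m\bmod 24$, $f(m)$ is a quadratic polynomial in $m$ whose coefficients depend only on $r$, as claimed. The main obstacle is the final bookkeeping, namely explicit determination of the twelve quadratic polynomials corresponding to $r\in\{0,2,4,\dots,22\}$ from the partial fraction data; only the structural conclusion is needed for the proposition as stated, and a numerical cross-check against the decompositions in the preceding example (e.g.\ $f(6)=c_3-c_1=1$ and $f(8)=c_6-c_3=2$) confirms the formula.
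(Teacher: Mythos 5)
Your proposal is correct in substance but takes a genuinely different route from the paper: the paper gives no internal argument at all and simply quotes the multiplicity of $V_m$ in $\Lambda^4(V_m)$ from Bremner--Elgendy \cite{BRE}, whereas you derive the statement from Proposition \ref{3.13} (which the paper does prove) plus an analysis of the coefficients of $C(x)$. Your reduction is exactly right: all summands of $\Lambda^4(V_{k+3})$ have even highest weight, so the multiplicity vanishes for odd $m$, and for even $m$ setting $k=m-3$, $\ell=m/2$ in Proposition \ref{3.13} gives $f(m)=c_{3m/2-6}-c_{m-5}$; the checks $f(6)=c_3-c_1=1$ and $f(8)=c_6-c_3=2$ agree with the displayed decompositions of $\Lambda^4(V_6)$ and $\Lambda^4(V_8)$. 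What your route buys is a self-contained proof of the qualitative statement (quadratic in $m$, period $24$), with the explicit twelve polynomials left as bookkeeping; what the citation buys is precisely those explicit values.

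One repair is needed in the middle step. The decomposition $c_n=P(n)+g(n\bmod 12)$ with $P$ a single fixed quadratic is not literally true: the denominator factors as $(1-x)^3(1+x)^2(1+x+x^2)(1+x^2)$, so besides the triple pole at $x=1$ there is a \emph{double} pole at $x=-1$, whose contribution to $c_n$ has the form $(\alpha n+\beta)(-1)^n$ with $\alpha\neq 0$, i.e.\ it grows linearly and is not a bounded periodic correction. The correct statement is that $c_n$ is a quasi-polynomial of degree $2$ and period $12$: the coefficient of $n^2$ is the constant $1/48$, the linear coefficient depends on $n\bmod 2$, and the constant term on $n\bmod 12$. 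This corrected version feeds into the rest of your argument unchanged: on each residue class, $c_{3m/2-6}$ and $c_{m-5}$ are quadratic polynomials in the even variable $m$, the relevant periods in $m$ are $8$ and $12$ as you say, so $f(m)$ is quadratic with coefficients depending only on $m\bmod 24$, and its leading coefficient $3/64-1/48=5/192$ is even seen to be positive and independent of the residue.
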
  

Now we are ready to state our results for the adjoint cohomology spaces
$H^k(\Lg,\Lg)$ for the Lie algebra $\Lg=\mathfrak{sl}_2(\C)\ltimes V_m$.

\begin{prop}\label{3.18}
Let  $\Lg=\mathfrak{sl}_2(\C)\ltimes V_m$. Then we have
\[ 
H^2(\Lg,\Lg) = \begin{cases} \C, \text{ if } m\equiv 2\bmod 4, \text{ or } m=4 \\  
0, \text{ otherwise }  \end{cases}
\]
\end{prop}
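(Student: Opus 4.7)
The plan is to invoke Proposition~\ref{3.4} to identify $H^2(\Lg,\Lg)$ with $H^2(V,\Lg)^\Ls$, and then to extract the dimension of the latter from the relevant segment of the long exact sequence of Lemma~\ref{3.2}:
\[
H^1(V,\Lg)^\Ls \to H^1(V,\Lg/V)^\Ls \xrightarrow{\delta_1} H^2(V,V)^\Ls \to H^2(V,\Lg)^\Ls \to H^2(V,\Lg/V)^\Ls \xrightarrow{\delta_2} H^3(V,V)^\Ls.
\]
The four outer terms are supplied by Proposition~\ref{3.1} and Schur's lemma (for $H^1(V,\Lg/V)^\Ls$), by Corollary~\ref{3.6}, and by Corollary~\ref{3.12}. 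Moreover, the natural map $H^1(V,V)^\Ls\to H^1(V,\Lg)^\Ls$ induced by $V\hookrightarrow\Lg$ sends $\id_V$ to $\id_V$, so with both sides equal to $\C$ it is an isomorphism; hence $H^1(V,\Lg)^\Ls\to H^1(V,\Lg/V)^\Ls$ vanishes, and when $m=2$ the connecting map $\delta_1\colon\C\to\C$ is forced to be an isomorphism.

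I then argue by cases on $m\bmod 4$. If $m$ is odd, both $H^2(V,V)^\Ls$ and $H^2(V,\Lg/V)^\Ls$ vanish, giving $H^2(V,\Lg)^\Ls=0$. If $m=2$, the isomorphism $\delta_1$ forces $H^2(V,V)^\Ls\to H^2(V,\Lg)^\Ls$ to be zero; combined with $H^3(V,V)^\Ls=0$ (here $n=1$, $\lfloor n/3\rfloor=0$), this yields $H^2(V,\Lg)^\Ls\cong H^2(V,\Lg/V)^\Ls=\C$. If $m=4$, then $H^1(V,\Lg/V)^\Ls$, $H^2(V,V)^\Ls$, and $H^3(V,V)^\Ls$ are all zero (now $n=2$, $\lfloor n/3\rfloor=0$) while $H^2(V,\Lg/V)^\Ls=\C$, and the same reasoning gives $H^2(V,\Lg)^\Ls\cong\C$.

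The remaining cases are $m\equiv 2\bmod 4$ with $m\ge 6$ and $m\equiv 0\bmod 4$ with $m\ge 8$. In both, $H^1(V,\Lg/V)^\Ls=0$ and $H^3(V,V)^\Ls\ne 0$, and the answer depends solely on the connecting map $\delta_2$, whose one-dimensional source (by Corollary~\ref{3.6}) means it suffices to prove $\delta_2\ne 0$. Granting this, the exact sequence collapses to $H^2(V,\Lg)^\Ls\cong H^2(V,V)^\Ls$, which is $\C$ if $m\equiv 2\bmod 4$ and $0$ if $m\equiv 0\bmod 4$, matching the claim.

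The crux is therefore the nonvanishing of $\delta_2$ for even $m\ge 6$. Unwinding the definition, $\delta_2$ sends the (unique up to scalar) $\Ls$-equivariant generator $\omega_\Ls\in\Hom_\Ls(\Lambda^2 V_m,V_2)$ to its Jacobiator
\[
d\omega_\Ls(v_0,v_1,v_2)=[\omega_\Ls(v_1,v_2),v_0]-[\omega_\Ls(v_0,v_2),v_1]+[\omega_\Ls(v_0,v_1),v_2]\in\Hom_\Ls(\Lambda^3 V_m,V_m).
\]
Proving $d\omega_\Ls\ne 0$ amounts to showing that the unique $\Ls$-equivariant skew bracket $V_m\wedge V_m\to V_2$ fails the Jacobi identity, i.e., does not endow $\Ls\oplus V_m$ with a Lie algebra structure distinct from $\Lg$. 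I would verify this by fixing a standard weight basis of $V_m$, writing $\omega_\Ls$ explicitly via the Clebsch--Gordan projection $\Lambda^2 V_m\twoheadrightarrow V_2$, and evaluating $d\omega_\Ls$ on a highest-weight-adapted triple; alternatively one can appeal to the explicit computations of \cite{RAU,RIC}. This is the main obstacle; once it is cleared, the case analysis above completes the proof.
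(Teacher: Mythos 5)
Your exact-sequence setup is sound, and in the cases where it closes it coincides with the paper's own independent argument: for odd $m$ both $H^2(V,V)^{\Ls}$ and $H^2(V,\Lg/V)^{\Ls}$ vanish by Corollary \ref{3.6}, and for $m=4$ the connecting map out of $H^2(V,\Lg/V)^{\Ls}$ has zero target since $H^3(V,V)^{\Ls}\cong\C^{\lfloor n/3\rfloor}=0$ by Corollary \ref{3.12}; together with Proposition \ref{3.4} this gives exactly the paper's computation. Your treatment of $m=2$ (which the paper does not argue independently) is also correct: the map $H^1(V,V)^{\Ls}\ra H^1(V,\Lg)^{\Ls}$ sends $[\id_V]$ to a nonzero class, so $\delta_1$ is an isomorphism and $H^2(V,\Lg)^{\Ls}\cong\C$ -- though you should justify that $\id_V$ is not a coboundary in $C^1(V,\Lg)$ (coboundaries restrict on $V$ to maps $v\mapsto -s\cdot v$ with $s\in\Ls$, which are traceless, while $\id_V$ is not).

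The genuine gap is in the remaining cases $m\equiv 2\bmod 4$, $m\ge 6$ and $m\equiv 0\bmod 4$, $m\ge 8$: there everything hinges on $\delta_2\ne 0$, i.e.\ on the failure of the Jacobi identity for the unique $\Ls$-equivariant map $\Lambda^2V_m\ra V_2$, and you do not prove this -- you only sketch how one might check it and label it ``the main obstacle.'' That nonvanishing is precisely the hard content of the proposition, so as written the proof is incomplete exactly where it matters; note also that the claimed identity must genuinely distinguish $m\ge 6$, since the same Jacobiator does vanish for $m=2,4$, so no soft argument will do. The fallback of citing \cite{RAU,RIC} does not repair this as stated: those references compute $H^2(\Lg,\Lg)$ itself (Rauch's Proposition 8 is the statement being proved), so using them to obtain $\delta_2\ne 0$ runs the logic backwards; if you are willing to cite them, you should cite them for the statement and drop the exact-sequence detour for those $m$ -- which is what the paper does, deriving the full result from Rauch and giving the long-exact-sequence argument only for odd $m$ and $m=4$. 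To make your route self-contained you would need to carry out the transvectant/Clebsch--Gordan computation of $d\omega_{\Ls}$ for all even $m\ge 6$ and exhibit a triple on which it is nonzero; that is a real calculation, not a formality.
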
  

\begin{proof}
This follows from \cite{RAU}, Proposition $8$, where the result is formulated in terms
of the dimension of $\Lg$. We can give an independent proof, using a different method, for the case
that $m$ is odd, and for $m=4$. Let $V=V_m$ and $\Ls=\mathfrak{sl}_2(\C)$. 
Then we have, by Lemma $\ref{3.2}$ and
Corollary $\ref{3.6}$ an exact sequence
\[
H^2(V,V)^{\Ls}\ra H^2(V,\Lg)^{\Ls}\ra H^2(V,\Lg/V)^{\Ls},
\]
where the outer terms are equal to zero. Hence, by the Hochschild-Serre formula we have 
$H^2(\Lg,\Lg)\cong H^2(V,\Lg)^{\Ls}=0$. For $m=4n$ we have $H^2(V,V)^{\Ls}=0$ and $H^2(V,\Lg/V)^{\Ls}\cong \C$
by Lemma $\ref{3.6}$. Furthermore, $H^3(V,V)^{\Ls}\cong \C^{\lfloor \frac{m}{6}\rfloor}$ by Corollary $\ref{3.12}$.
So the exact sequence of Lemma $\ref{3.2}$, for the exceptional case $m=4$ yields
\[
0\ra H^2(V,\Lg)^{\Ls}\ra \C\ra 0,
\]
and hence $H^2(\Lg,\Lg)\cong \C$.
\end{proof}

Theorem $\ref{3.7}$ also has the following immediate consequence.

\begin{prop}\label{3.19}
Let  $\Lg=\mathfrak{sl}_2(\C)\ltimes V_{2n-1}$ with $n\ge 1$. Then for all even $k\ge 0$ we have
\begin{align*}  
H^k(V,V)^{\Ls} & \cong \Hom_{\Ls}(\Lambda^k(V),V)=0, \\
H^{k+1}(V,\Lg/V)^{\Ls} & \cong \Hom_{\Ls}(\Lambda^{k+1}(V),\Ls)=0,
\end{align*}
where $\Ls=\mathfrak{sl}_2(\C)$ and $V=V_{2n-1}$.   
It follows that the long exact cohomology sequence of Lemma $\ref{3.2}$
splits into $4$-term exact sequences of the form
\[
0\ra H^k(V,\Lg)^{\Ls} \ra H^k(V,\Lg/V)^{\Ls} \ra H^{k+1}(V,V)^{\Ls}\ra H^{k+1}(V,\Lg)^{\Ls}\ra 0
\]
for all even $k\ge 0$.
\end{prop}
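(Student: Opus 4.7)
The plan is to exploit a parity argument on the weights of $\mathfrak{sl}_2(\C)$-modules. Since $V=V_{2n-1}$ has weights $2n-1,2n-3,\ldots,-(2n-1)$, which are all odd, every basis wedge $e_{i_1}\wedge\cdots\wedge e_{i_k}$ of $\Lambda^k(V)$ has weight equal to a sum of $k$ odd integers. Hence all weights of $\Lambda^k(V)$ share the parity of $k$. Because the weights of an irreducible $V_j$ all have the parity of $j$, the decomposition of $\Lambda^k(V)$ into simple summands can only involve $V_j$'s with $j\equiv k\pmod 2$.

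For even $k$, this says $\Lambda^k(V)$ is a direct sum of even-index simples $V_{2i}$. Since $V=V_{2n-1}$ is odd-index, Schur's lemma immediately gives
\[
\Hom_{\Ls}(\Lambda^k(V),V)=0,
\]
which via Lemma $\ref{3.3}$ yields the first vanishing $H^k(V,V)^{\Ls}=0$. For the same reason, when $k$ is even so that $k+1$ is odd, $\Lambda^{k+1}(V)$ decomposes into odd-index simples, while $\Ls\cong V_2$ is even-index, so
\[
\Hom_{\Ls}(\Lambda^{k+1}(V),\Ls)=0,
\]
again by Schur's lemma, giving $H^{k+1}(V,\Lg/V)^{\Ls}=0$.

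With both vanishings in hand, the second assertion is a purely diagrammatic consequence of Lemma $\ref{3.2}$. Indeed, the relevant portion of the long exact sequence reads
\[
H^k(V,V)^{\Ls}\to H^k(V,\Lg)^{\Ls}\to H^k(V,\Lg/V)^{\Ls}\to H^{k+1}(V,V)^{\Ls}\to H^{k+1}(V,\Lg)^{\Ls}\to H^{k+1}(V,\Lg/V)^{\Ls},
\]
and inserting the two zeros just established at the first and last positions forces the claimed $4$-term exact sequence for every even $k\ge 0$.

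There is essentially no obstacle here: the parity argument is elementary, and the only thing to be careful about is to apply the vanishing on the correct end of the long exact sequence (i.e., $H^k(V,V)^{\Ls}=0$ on the left and $H^{k+1}(V,\Lg/V)^{\Ls}=0$ on the right, for the same even $k$). Once this bookkeeping is straight, both conclusions follow directly.
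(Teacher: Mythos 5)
Your proposal is correct. The only difference from the paper is in how the parity of the simple constituents of $\Lambda^k(V)$ is obtained: the paper reads it off as an immediate consequence of Theorem \ref{3.7}, whose decomposition $\Lambda^j(V_{j+k-1})\simeq\bigoplus_n (V_{jk-2n})^{\oplus N(j,k,n)}$ shows that every summand has index of the same parity as $jk$ (equivalently, as $j\cdot m$ for $m=j+k-1$ odd), whereas you prove this parity statement directly by noting that all weights of $V_{2n-1}$ are odd, so every weight of $\Lambda^k(V)$ is a sum of $k$ odd integers and hence has the parity of $k$. Your route is marginally more elementary and self-contained, since it needs no plethysm or multiplicity formula, only the weight structure of $\mathfrak{sl}_2(\C)$-modules; the paper's route costs nothing extra because Theorem \ref{3.7} is already in place for the later multiplicity computations. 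From the parity statement onward the two arguments coincide: Schur's Lemma together with Lemma \ref{3.3} gives $\Hom_{\Ls}(\Lambda^k(V),V)=0$ for even $k$ and $\Hom_{\Ls}(\Lambda^{k+1}(V),\Ls)=0$ (as $\Ls\cong V_2$ has even index), and inserting these zeros into the long exact sequence of Lemma \ref{3.2}, with the vanishing placed at $H^k(V,V)^{\Ls}$ on the left and at $H^{k+1}(V,\Lg/V)^{\Ls}$ on the right for the same even $k$, yields exactly the stated $4$-term exact sequences.
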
  

We can compute all cohomology spaces $H^k(\Lg,\Lg)$ for $0\le k\le 4$ in this case.

\begin{prop}\label{3.20}
Let  $\Lg=\mathfrak{sl}_2(\C)\ltimes V_{2n-1}$ with $n\ge 1$. Then we have
\begin{align*}
H^0(\Lg,\Lg) & = 0, \; H^1(\Lg,\Lg) \cong \C,\; H^2(\Lg,\Lg)  = 0,\\
H^3(\Lg,\Lg) & \cong \C^{\lfloor \frac{n+1}{3}\rfloor}, \; H^4(\Lg,\Lg) \cong \C.
\end{align*}  
\end{prop}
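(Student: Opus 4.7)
The first three assertions are immediate: $H^0(\Lg,\Lg) = Z(\Lg) = 0$ and $H^1(\Lg,\Lg) \cong \C$ are restatements of Proposition \ref{3.1}, while $H^2(\Lg,\Lg) = 0$ for odd $m = 2n-1$ is contained in Proposition \ref{3.18} (the case $m \equiv 2 \bmod 4$ is excluded). The plan for $H^3$ and $H^4$ is to exploit the Hochschild-Serre decomposition
\[
H^k(\Lg,\Lg) \cong H^k(V,\Lg)^{\Ls} \oplus H^{k-3}(V,\Lg)^{\Ls}
\]
from Proposition \ref{3.4}, so it suffices to compute $H^0, H^1, H^3, H^4$ of $V$ with values in $\Lg$, where throughout $V = V_{2n-1}$ and $\Ls = \mathfrak{sl}_2(\C)$.

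The key tool is the collection of $4$-term exact sequences from Proposition \ref{3.19}, available for each even $k \ge 0$:
\[
0 \to H^k(V,\Lg)^{\Ls} \to H^k(V,\Lg/V)^{\Ls} \to H^{k+1}(V,V)^{\Ls} \to H^{k+1}(V,\Lg)^{\Ls} \to 0.
\]
Applying this with $k=0$: the term $H^0(V,\Lg/V)^{\Ls} \cong \Hom_{\Ls}(\C,\Ls)$ vanishes since $\Ls \cong V_2$ contains no trivial summand, so $H^0(V,\Lg)^{\Ls} = 0$, and the sequence collapses to $H^1(V,\Lg)^{\Ls} \cong H^1(V,V)^{\Ls} \cong \C$ by Schur's lemma. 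Applying it with $k=2$: by Corollary \ref{3.6} the term $H^2(V,\Lg/V)^{\Ls}$ vanishes for odd $m$, hence $H^2(V,\Lg)^{\Ls} = 0$ and $H^3(V,\Lg)^{\Ls} \cong H^3(V,V)^{\Ls} \cong \C^{\lfloor (n+1)/3 \rfloor}$ by Corollary \ref{3.12}. Applying it with $k=4$: by Proposition \ref{3.15} the term $H^4(V,\Lg/V)^{\Ls}$ vanishes for $m \ge 3$ (for $n=1$ one has $V = V_1$, so $\Lambda^4(V) = 0$ and the statement is trivial), giving $H^4(V,\Lg)^{\Ls} = 0$.

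Combining via Proposition \ref{3.4} yields
\[
H^3(\Lg,\Lg) \cong H^3(V,\Lg)^{\Ls} \oplus H^0(V,\Lg)^{\Ls} \cong \C^{\lfloor (n+1)/3 \rfloor},
\]
\[
H^4(\Lg,\Lg) \cong H^4(V,\Lg)^{\Ls} \oplus H^1(V,\Lg)^{\Ls} \cong \C.
\]
There is essentially no obstacle here beyond invoking the machinery already assembled; the proof is entirely bookkeeping once one recognises that all four relevant ``corner'' terms in the $4$-term sequences are easily handled, the crucial input being the vanishing $\Hom_{\Ls}(\Lambda^4(V_m),V_2)=0$ for odd $m$ from Proposition \ref{3.15}, which forces $H^4(V,\Lg)^{\Ls}$ to be zero and thus isolates the single copy of $\C$ contributed by $H^1(V,\Lg)^{\Ls}$ through the Hochschild-Serre summand.
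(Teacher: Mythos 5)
Your proof is correct and follows essentially the same route as the paper: the cases $k\le 2$ from Propositions \ref{3.1} and \ref{3.18}, the $4$-term exact sequences of Proposition \ref{3.19} together with Corollaries \ref{3.6}, \ref{3.12} and Proposition \ref{3.15} to get $H^3(V,\Lg)^{\Ls}\cong\C^{\lfloor\frac{n+1}{3}\rfloor}$ and $H^4(V,\Lg)^{\Ls}=0$, and then the Hochschild--Serre formula of Proposition \ref{3.4}. The only (harmless) difference is that you make explicit, via the $k=0$ sequence, the values $H^0(V,\Lg)^{\Ls}=0$ and $H^1(V,\Lg)^{\Ls}\cong\C$ that the paper uses implicitly, and you note the trivial $n=1$ edge case of Proposition \ref{3.15}.
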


\begin{proof}
The result on $H^k(\Lg,\Lg)$ has been shown for $k=0,1,2$ in Proposition $\ref{3.1}$ and Proposition $\ref{3.18}$ 
We have $H^3(V,V)^{\Ls}\cong  \C^{\lfloor \frac{n+1}{3}\rfloor}$ by Corollary $\ref{3.12}$, and $H^j(V,V)^{\Ls}=0$
for $j=0,2,4$ by Proposition $\ref{3.19}$. In the same way we see that $H^j(V,\Lg/V)^{\Ls}=0$ for $j=1,2,3,4$.
So the above $4$-term exact sequences yield $H^3(V,\Lg)^{\Ls}\cong \C^{\lfloor \frac{n+1}{3}\rfloor}$ and
$H^4(V,\Lg)^{\Ls}=0$. Finally, the Hochschild-Serre formula gives
\begin{align*}
H^3(\Lg,\Lg) & \cong H^3(V,\Lg)^{\Ls}\oplus H^0(V,\Lg)^{\Ls}\cong \C^{\lfloor \frac{n+1}{3}\rfloor},\\
H^4(\Lg,\Lg) & \cong H^4(V,\Lg)^{\Ls} \oplus H^1(V,\Lg)^{\Ls}\cong \C.
\end{align*}  
\end{proof}

\section{The total adjoint cohomology}

We can now compute the total adjoint cohomology of $\Lg=\mathfrak{sl}_2(\C)\ltimes V_m$  for some small $m$, namely
for $m=1,2,3,5,7$. Since $\dim \Lg=m+4$ we have $H^k(\Lg,\Lg)=0$ for all $k> m+4$. Furthermore we have
\[
H^{m+4}(\Lg,\Lg)\cong H_0(\Lg,\Lg)\cong \Lg/[\Lg,\Lg]=0,
\]
since $\Lg$ is perfect. \\[0.2cm]
For $m=1$ we have already computed the cohomology in \cite{BU75},  Proposition $3.6$.

\begin{prop}
Let $\Lg=\mathfrak{sl}_2(\C)\ltimes V_1$ and $k\ge 0$. Then we have
\[
H^k(\Lg,\Lg) \cong H^{k-1}(\mathfrak{sl}_2(\C),\C) 
=\begin{cases} \C, \text{ if } k=1,4 \\
0, \text{ otherwise} \end{cases}
\]
\end{prop}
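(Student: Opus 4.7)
The plan is to exploit the fact that $V=V_1$ is only $2$-dimensional, so the exterior powers $\Lambda^k(V_1)$ are essentially trivial: we have $\Lambda^0(V_1)\cong V_0$, $\Lambda^1(V_1)\cong V_1$, $\Lambda^2(V_1)\cong V_0$, and $\Lambda^k(V_1)=0$ for all $k\ge 3$. This makes the whole machinery of Lemmas $\ref{3.2}$--$\ref{3.3}$ and Proposition $\ref{3.4}$ collapse to a finite, explicit computation.

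First I would apply Lemma $\ref{3.3}$ together with Schur's Lemma to tabulate $H^k(V,V)^{\Ls}$ and $H^k(V,\Lg/V)^{\Ls}$ for all $k\ge 0$. Since $\Ls=\mathfrak{sl}_2(\C)\cong V_2$, the only nontrivial $\Ls$-homomorphism among the candidates $\Hom_{\Ls}(\Lambda^k(V_1),V_1)$ and $\Hom_{\Ls}(\Lambda^k(V_1),V_2)$ is $\Hom_{\Ls}(V_1,V_1)\cong \C$ at $k=1$. Hence
\[
H^1(V,V)^{\Ls}\cong \C, \qquad H^k(V,V)^{\Ls}=0 \ (k\ne 1), \qquad H^k(V,\Lg/V)^{\Ls}=0 \ (\text{all } k).
\]

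Next I would feed these values into the long exact sequence of Lemma $\ref{3.2}$. All $H^k(V,\Lg/V)^{\Ls}$ terms vanish, so the sequence breaks into short pieces of the form $0\to H^k(V,V)^{\Ls}\to H^k(V,\Lg)^{\Ls}\to 0$, yielding $H^k(V,\Lg)^{\Ls}\cong H^k(V,V)^{\Ls}$ for every $k$. Thus $H^1(V,\Lg)^{\Ls}\cong \C$ and all other $H^k(V,\Lg)^{\Ls}$ vanish.

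Finally, I would invoke the Hochschild-Serre decomposition of Proposition $\ref{3.4}$,
\[
H^k(\Lg,\Lg)\cong H^k(V,\Lg)^{\Ls}\oplus H^{k-3}(V,\Lg)^{\Ls},
\]
which immediately gives $H^k(\Lg,\Lg)\cong \C$ exactly when $k=1$ or $k=4$, and zero otherwise. Matching this against $H^{k-1}(\mathfrak{sl}_2(\C),\C)$, which is $\C$ precisely for $k-1=0,3$ by the classical computation for simple Lie algebras, completes the identification. There is no real obstacle in this proof; the only thing that requires a bit of care is verifying $\Lambda^2(V_1)\cong V_0$ (a direct check, or the $n=1$ case of Lemma $\ref{3.5}$) so that no hidden $V_1$ or $V_2$ summand appears in the relevant exterior powers.
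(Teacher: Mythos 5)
Your argument is correct, but it is not the route the paper takes: for $m=1$ the paper gives no computation at all and simply cites Proposition $3.6$ of \cite{BU75}, where this case was already settled. What you do instead is run, self-contained, exactly the machinery the paper reserves for $m=2,3,5,7$: since $\dim V_1=2$, the only exterior powers are $\Lambda^0(V_1)\cong V_0$, $\Lambda^1(V_1)\cong V_1$, $\Lambda^2(V_1)\cong V_0$, so by Lemma $\ref{3.3}$ and Schur's Lemma the only nonzero invariant space is $H^1(V,V)^{\Ls}\cong\C$, all $H^k(V,\Lg/V)^{\Ls}$ vanish, the long exact sequence of Lemma $\ref{3.2}$ collapses to isomorphisms $H^k(V,\Lg)^{\Ls}\cong H^k(V,V)^{\Ls}$, and Proposition $\ref{3.4}$ then gives $\C$ exactly in degrees $1$ and $4$. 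This is consistent with the paper's own framework (indeed Proposition $\ref{3.20}$ with $n=1$ recovers the same answer in degrees $\le 4$), and your computation even explains the displayed identification with $H^{k-1}(\mathfrak{sl}_2(\C),\C)$: since $H^j(V,\Lg)^{\Ls}$ is one-dimensional and concentrated in degree $j=1$, the Hochschild--Serre decomposition $\bigoplus_{i+j=k}H^i(\Ls,\C)\otimes H^j(V,\Lg)^{\Ls}$ reduces to $H^{k-1}(\Ls,\C)$. The gain of your version is that it makes the $m=1$ case independent of the external reference; the paper's citation buys brevity. The only point needing the care you already note is $\Lambda^2(V_1)\cong V_0$, which is the $n=1$ instance of Lemma $\ref{3.5}$.
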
 

\begin{prop}
Let $\Lg=\mathfrak{sl}_2(\C)\ltimes V_2$ and $k\ge 0$.
\[
H^k(\Lg,\Lg) \cong \begin{cases} \C, \text{ if } k=1,2,4,5 \\
0, \text{ otherwise} \end{cases}
\]
\end{prop}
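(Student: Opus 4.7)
The plan is to compute $H^k(\Lg,\Lg)$ for $k=0,1,\dots,6$, dispose of the easy cases first, and then reduce the remaining ones to a short long-exact-sequence computation of $H^\bullet(V,\Lg)^\Ls$ with $V=V_2\cong\Ls$.

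First, the boundary cases: Proposition \ref{3.1} gives $H^0(\Lg,\Lg)=0$ and $H^1(\Lg,\Lg)\cong\C$. Since $m=2\equiv 2\pmod 4$, Proposition \ref{3.18} gives $H^2(\Lg,\Lg)\cong\C$. Finally, $\dim\Lg=6$, and $H^6(\Lg,\Lg)\cong H_0(\Lg,\Lg)\cong\Lg/[\Lg,\Lg]=0$ by perfection, with $H^k(\Lg,\Lg)=0$ for $k>6$. So only $k=3,4,5$ remain, and by Proposition \ref{3.4} these are governed by the $\Ls$-invariant cohomology groups $H^j(V,\Lg)^\Ls$ for $j=0,1,2,3,4,5$.

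Next, I would compute $H^j(V,V)^\Ls$ and $H^j(V,\Lg/V)^\Ls$ explicitly via Lemma \ref{3.3}. Since $V=V_2$ has dimension $3$, the exterior algebra is very small: $\Lambda^0(V_2)\cong V_0$, $\Lambda^1(V_2)\cong V_2$, $\Lambda^2(V_2)\cong V_2$ (by Lemma \ref{3.5} with $n=1$), $\Lambda^3(V_2)\cong V_0$, and $\Lambda^j(V_2)=0$ for $j\ge 4$. Using $\Ls\cong V_2$ and Schur's lemma, this yields
\begin{align*}
H^0(V,V)^\Ls &= 0, & H^1(V,V)^\Ls &\cong \C, & H^2(V,V)^\Ls &\cong \C, & H^j(V,V)^\Ls &= 0\ (j\ge 3),\\
H^0(V,\Lg/V)^\Ls &= 0, & H^1(V,\Lg/V)^\Ls &\cong \C, & H^2(V,\Lg/V)^\Ls &\cong \C, & H^j(V,\Lg/V)^\Ls &= 0\ (j\ge 3).
\end{align*}

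Plugging this into the long exact sequence of Lemma \ref{3.2} gives $H^0(V,\Lg)^\Ls=0$ and $H^j(V,\Lg)^\Ls=0$ for $j\ge 3$ from the vanishing outer terms, while the interesting middle portion reads
\[
0\to \C\to H^1(V,\Lg)^\Ls \to \C \to \C\to H^2(V,\Lg)^\Ls \to \C\to 0.
\]
The values of $H^1(V,\Lg)^\Ls$ and $H^2(V,\Lg)^\Ls$ are already pinned down: Proposition \ref{3.4} in low degree says they coincide with $H^1(\Lg,\Lg)\cong\C$ and $H^2(\Lg,\Lg)\cong\C$ respectively, so the sequence is automatically consistent and no separate connecting-map analysis is needed. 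The only mild conceptual obstacle is confirming that one need not determine the connecting maps by hand — this is handled by using the already-known values of $H^1$ and $H^2$ of $\Lg$ as input.

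Finally, I would apply Proposition \ref{3.4} in the form $H^k(\Lg,\Lg)\cong H^k(V,\Lg)^\Ls \oplus H^{k-3}(V,\Lg)^\Ls$ to the remaining degrees: $H^3(\Lg,\Lg)\cong 0\oplus 0=0$, $H^4(\Lg,\Lg)\cong 0\oplus\C\cong\C$, and $H^5(\Lg,\Lg)\cong 0\oplus\C\cong\C$. Combined with the boundary cases, this gives $H^k(\Lg,\Lg)\cong\C$ exactly for $k=1,2,4,5$ and zero otherwise, completing the proof.
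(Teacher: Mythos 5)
Your proof is correct and follows essentially the same route as the paper: low degrees from Propositions \ref{3.1} and \ref{3.18}, vanishing of $H^j(V,\Lg)^{\Ls}$ for $j\ge 3$ from the long exact sequence of Lemma \ref{3.2} together with $\dim V=3$, and then the Hochschild--Serre formula of Proposition \ref{3.4} for $k=3,4,5$. Your explicit remark that $H^1(V,\Lg)^{\Ls}$ and $H^2(V,\Lg)^{\Ls}$ are pinned down by Proposition \ref{3.4} in low degree (rather than by analyzing connecting maps) is exactly what the paper uses implicitly.
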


\begin{proof}
Let $\Ls=\mathfrak{sl}_2(\C)$ and $V=V_2$. We have $H^0(\Lg,\Lg)=0$, and $H^1(\Lg,\Lg)\cong H^2(\Lg,\Lg)\cong \C$
by Proposition $\ref{3.1}$ and $\ref{3.18}$.
By Proposition $\ref{3.9}$  we have $H^3(V,V)^{\Ls}=H^3(V,\Lg/V)^{\Ls}=0$. Hence the exact sequence from Lemma $\ref{3.2}$
gives $H^3(V,\Lg)^{\Ls}=0$. Similarly, we obtain $H^0(V,\Lg/V)^{\Ls}=0$. Also $H^j(V,\Lg)^{\Ls}=0$ for $j>3$ because of
$\dim V=3$. So the Hochschild-Serre formula yields
\begin{align*}
H^3(\Lg,\Lg) & = H^3(V,\Lg)^{\Ls} \oplus H^0(V,\Lg)^{\Ls} =0, \\
H^4(\Lg,\Lg) & = H^1(V,\Lg)^{\Ls}\cong \C, \\
H^5(\Lg,\Lg) & = H^2(V,\Lg)^{\Ls}\cong \C.
\end{align*}  
\end{proof}

\begin{prop}
Let $\Lg=\Ls \ltimes V$, with $\Ls=\mathfrak{sl}_2(\C)$, $V=V_3$, and let $k\ge 0$. Then we have
\[
H^k(\Lg,\Lg) \cong \begin{cases} \C, \text{ if } k=1,3,4,6 \\
0, \text{ otherwise} \end{cases}
\]
\end{prop}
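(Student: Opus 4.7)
The plan is to combine Proposition \ref{3.20} (in the case $n=2$) with the Hochschild-Serre formula of Proposition \ref{3.4} to fill in the remaining degrees. Since $m=3=2n-1$ with $n=2$, Proposition \ref{3.20} directly supplies
\[
H^0(\Lg,\Lg)=0,\; H^1(\Lg,\Lg)\cong\C,\; H^2(\Lg,\Lg)=0,\; H^3(\Lg,\Lg)\cong\C,\; H^4(\Lg,\Lg)\cong\C,
\]
using $\lfloor (n+1)/3\rfloor = 1$. This already matches the claimed pattern for $k\le 4$, so only the three degrees $k=5,6,7$ remain to be treated.

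For the top degree, $\dim\Lg=m+4=7$, hence $H^k(\Lg,\Lg)=0$ for $k>7$, and the standard identification $H^7(\Lg,\Lg)\cong H_0(\Lg,\Lg)\cong\Lg/[\Lg,\Lg]$ together with perfectness gives $H^7(\Lg,\Lg)=0$, exactly as recalled at the start of Section 4. For $k=5$ and $k=6$, I would invoke Proposition \ref{3.4} to write
\[
H^k(\Lg,\Lg)\cong H^k(V,\Lg)^{\Ls}\oplus H^{k-3}(V,\Lg)^{\Ls}.
\]
Since $\dim V=m+1=4$, Lemma \ref{3.2} forces $H^k(V,\Lg)^{\Ls}=0$ for $k>4$, so the first summand drops out in both cases, leaving $H^5(\Lg,\Lg)\cong H^2(V,\Lg)^{\Ls}$ and $H^6(\Lg,\Lg)\cong H^3(V,\Lg)^{\Ls}$.

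It then remains to identify these two surviving invariant spaces, and both have already been computed. Proposition \ref{3.4} yields $H^2(V,\Lg)^{\Ls}\cong H^2(\Lg,\Lg)=0$, while the proof of Proposition \ref{3.20} established $H^3(V,\Lg)^{\Ls}\cong\C^{\lfloor(n+1)/3\rfloor}=\C$ for $n=2$. This gives $H^5(\Lg,\Lg)=0$ and $H^6(\Lg,\Lg)\cong\C$, completing the table. No step looks like a genuine obstacle; the proof is essentially a bookkeeping exercise that exploits the Hochschild-Serre shift $H^k\leftrightarrow H^{k-3}$ of $\Ls$-invariants to fold the high degrees back onto low-degree computations already carried out, with the low-degree vanishing statements of Proposition \ref{3.19} (the $V_3$ case is $n=2$) doing all of the real work.
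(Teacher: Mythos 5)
Your proposal is correct and follows essentially the same route as the paper: degrees $k\le 4$ from Proposition \ref{3.20}, the top degree from the general remark on $H^{m+4}(\Lg,\Lg)\cong \Lg/[\Lg,\Lg]=0$, and the Hochschild--Serre shift reducing $H^5$ and $H^6$ to $H^2(V,\Lg)^{\Ls}=0$ and $H^3(V,\Lg)^{\Ls}\cong\C$, exactly as in the paper's proof.
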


\begin{proof}
The result for $H^k(\Lg,\Lg)$ with $0\le k\le 4$ follows from Proposition $\ref{3.20}$. By the Hochschild-Serre
formula we then obtain 
\begin{align*}
H^5(\Lg,\Lg) & \cong H^2(V,\Lg)^{\Ls} =0, \\
H^6(\Lg,\Lg) & \cong H^3(V,\Lg)^{\Ls} \cong \C.
\end{align*}
\end{proof}

\begin{prop}
Let $\Lg=\Ls \ltimes V$, with $\Ls=\mathfrak{sl}_2(\C)$, $V=V_5$, and let $k\ge 0$. Then we have
\[
H^k(\Lg,\Lg) \cong \begin{cases} \C, \text{ if } k=1,3,4,5,6,8 \\
0, \text{ otherwise} \end{cases}
\]
\end{prop}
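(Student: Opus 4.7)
The plan is to combine Proposition~\ref{3.20} (which covers $0 \le k \le 4$ with $V = V_5 = V_{2n-1}$, $n = 3$) with the Hochschild-Serre splitting
\[
H^k(\Lg,\Lg) \cong H^k(V,\Lg)^{\Ls} \oplus H^{k-3}(V,\Lg)^{\Ls}
\]
of Proposition~\ref{3.4} and the 4-term exact sequences of Proposition~\ref{3.19} for $k = 5, 6, 7, 8$. From the proof of Proposition~\ref{3.20} we already know $H^j(V,\Lg)^{\Ls} = 0$ for $j = 0, 2, 4$ and $H^1(V,\Lg)^{\Ls} \cong H^3(V,\Lg)^{\Ls} \cong \C$. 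Since $\dim V = 6$, all $H^k(V,-)^{\Ls}$ vanish for $k > 6$, so $H^7(\Lg,\Lg) \cong H^4(V,\Lg)^{\Ls} = 0$ automatically, and $H^9(\Lg,\Lg) = 0$ by the perfectness argument recalled at the start of the section. Thus only $H^5(V,\Lg)^{\Ls}$ and $H^6(V,\Lg)^{\Ls}$ remain to be determined.

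Applying Proposition~\ref{3.19} with $k = 4$ yields
\[
0 \to H^4(V,\Lg)^{\Ls} \to H^4(V,\Lg/V)^{\Ls} \to H^5(V,V)^{\Ls} \to H^5(V,\Lg)^{\Ls} \to 0,
\]
in which $H^4(V,\Lg/V)^{\Ls} = 0$ by Proposition~\ref{3.15}, so $H^5(V,\Lg)^{\Ls} \cong H^5(V,V)^{\Ls}$. The same sequence with $k = 6$ reads
\[
0 \to H^6(V,\Lg)^{\Ls} \to H^6(V,\Lg/V)^{\Ls} \to H^7(V,V)^{\Ls} \to H^7(V,\Lg)^{\Ls} \to 0,
\]
and $H^6(V,\Lg/V)^{\Ls} = \Hom_{\Ls}(\Lambda^6 V_5, V_2) = \Hom_{\Ls}(V_0, V_2) = 0$ (using $\Lambda^{\dim V} V \cong V_0$), while $H^7(V,V)^{\Ls} = 0$ for dimension reasons; hence $H^6(V,\Lg)^{\Ls} = 0$.

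The only genuine new computation is $H^5(V,V)^{\Ls} = \Hom_{\Ls}(\Lambda^5 V_5, V_5)$. Theorem~\ref{3.7} with $j = 5$, $k = 1$ gives $(1-q)\genfrac[]{0pt}{1}{6}{1}_q = 1 - q^6$, so $N(5,1,0) = 1$ and $N(5,1,n) = 0$ for $n = 1, 2$, yielding $\Lambda^5 V_5 \cong V_5$ and thus $H^5(V,V)^{\Ls} \cong \C$. (Equivalently, the perfect pairing $\Lambda^5 V_5 \otimes V_5 \to \Lambda^6 V_5 \cong V_0$ combined with self-duality of $V_5$ identifies $\Lambda^5 V_5$ with $V_5$.) Hence $H^5(V,\Lg)^{\Ls} \cong \C$, and Hochschild-Serre assembles everything into
\[
H^5(\Lg,\Lg) \cong \C, \quad H^6(\Lg,\Lg) \cong \C, \quad H^8(\Lg,\Lg) \cong \C,
\]
as claimed. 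The only mild obstacle is identifying $\Lambda^5 V_5 \cong V_5$, which Theorem~\ref{3.7} hands us almost for free.
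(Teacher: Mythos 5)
Your argument is correct and follows the paper's proof in all essentials: the known values for $k\le 4$, the $4$-term exact sequences of Proposition 3.19, and the Hochschild--Serre splitting to assemble $H^k(\Lg,\Lg)$ for $k\ge 5$. The only (harmless) deviation is that you obtain $H^5(V,V)^{\Ls}\cong\C$ by computing $\Lambda^5 V_5\cong V_5$ directly from Theorem 3.7, whereas the paper invokes Poincar\'e duality $H^5(V,V)^{\Ls}\cong H^1(V,V)^{\Ls}$; you also correctly attribute the vanishing $H^4(V,\Lg/V)^{\Ls}=\Hom_{\Ls}(\Lambda^4(V),V_2)=0$ to Proposition 3.15, where the paper's proof cites Proposition 3.19 by an apparent slip.
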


\begin{proof}
We only need to show the result for $k\ge 5$. 
We have $H^{6-j}(V,V)^{\Ls}\cong H^j(V,V)^{\Ls}$ for $j\le 6$, because of $\dim V=6$ and Poincar\'e duality. So we have
\begin{align*}
H^4(V,V)^{\Ls} & \cong H^2(V,V)^{\Ls}=0, \\
H^5(V,V)^{\Ls} & \cong H^1(V,V)^{\Ls}\cong \C.
\end{align*}
By Proposition $\ref{3.19}$ we have $H^4(V,\Lg/V)^{\Ls}=0$.
Altogether we have
\begin{align*}
H^k(V,V)^{\Ls} & \cong  \begin{cases} \C, \text{ if } k=1,3,5 \\
0, \text{ if } k=0,2,4 \end{cases}
\end{align*}
and $H^k(V,\Lg/V)^{\Ls} = 0$ for all $1\le k\le 6$. 
Hence by the $4$-term exact sequences of Proposition $\ref{3.19}$ we have
\begin{align*}
H^k(V,\Lg)^{\Ls} & \cong  \begin{cases} \C, \text{ if } k=1,3,5 \\
0, \text{ if } k=0,2,4 \end{cases}
\end{align*}
So the Hochschild-Serre formula gives
\begin{align*}
H^5(\Lg,\Lg) & \cong H^5(V,\Lg)^{\Ls} \oplus H^2(V,\Lg)^{\Ls} \cong \C, \\
H^6(\Lg,\Lg) & \cong H^6(V,\Lg)^{\Ls} \oplus H^3(V,\Lg)^{\Ls} \cong \C, \\
H^7(\Lg,\Lg) & \cong H^4(V,\Lg)^{\Ls} =0,\\
H^8(\Lg,\Lg) & \cong H^5(V,\Lg)^{\Ls} \cong \C.
\end{align*}
\end{proof} 

\begin{prop}\label{4.5}
Let $\Lg=\Ls \ltimes V$, with $\Ls=\mathfrak{sl}_2(\C)$, $V=V_7$, and let $k\ge 0$. Then we have
\[
H^k(\Lg,\Lg) \cong \begin{cases} \C, \text{ if } k=1,3,4,5,6,7,8,10 \\
0, \text{ otherwise} \end{cases}
\]
\end{prop}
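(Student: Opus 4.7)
The plan is to build on Proposition \ref{3.20}, which with $n=4$ already gives $H^k(\Lg,\Lg)$ for $0\le k\le 4$, and to compute $H^k(\Lg,\Lg)$ for $5\le k\le 11$ by means of the Hochschild-Serre formula of Proposition \ref{3.4}. For this I need $H^j(V,\Lg)^{\Ls}$ for $0\le j\le 8$ (since $\dim V=8$ kills everything above), and I will extract these from the four-term exact sequences of Proposition \ref{3.19}, which apply because $m=7=2\cdot 4-1$ is odd.

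The main ingredients to compute are $H^j(V,V)^{\Ls}$ for odd $j$ and $H^j(V,\Lg/V)^{\Ls}$ for even $j$. For the first collection, I already have $H^1(V,V)^{\Ls}\cong \C$ by Schur's lemma and $H^3(V,V)^{\Ls}\cong \C$ by Corollary \ref{3.12}. For $H^5$ and $H^7$ I will invoke Poincar\'e duality for the abelian Lie algebra $V=V_7$: since $V_7$ is self-dual as an $\Ls$-module and $\dim V=8$, one has $\Lambda^{8-j}(V)\cong \Lambda^j(V)$ as $\Ls$-modules, hence
\[
H^{8-j}(V,V)^{\Ls}\cong \Hom_{\Ls}(\Lambda^{8-j}(V),V)\cong \Hom_{\Ls}(\Lambda^j(V),V)\cong H^j(V,V)^{\Ls},
\]
and similarly for $\Lg/V$. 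This gives $H^5(V,V)^{\Ls}\cong H^3(V,V)^{\Ls}\cong \C$ and $H^7(V,V)^{\Ls}\cong H^1(V,V)^{\Ls}\cong \C$. For $H^j(V,\Lg/V)^{\Ls}$ with $j$ even, the sub-cases $j=0,8$ are trivial (no trivial submodule, respectively by duality), $j=2$ vanishes by Corollary \ref{3.6} (since $m=7$ is odd), $j=4$ vanishes by Proposition \ref{3.15}, and $j=6$ vanishes by Poincar\'e duality from $j=2$.

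With these inputs, the four-term exact sequences of Proposition \ref{3.19} for $k=0,2,4,6$ read $0\to H^k(V,\Lg)^{\Ls}\to 0\to H^{k+1}(V,V)^{\Ls}\to H^{k+1}(V,\Lg)^{\Ls}\to 0$, so $H^k(V,\Lg)^{\Ls}=0$ for $k\in\{0,2,4,6\}$ and $H^k(V,\Lg)^{\Ls}\cong \C$ for $k\in\{1,3,5,7\}$. For $k=8$ the corresponding sequence has both outer terms zero, giving $H^8(V,\Lg)^{\Ls}=0$, and degrees beyond $8$ vanish automatically. Feeding this into Proposition \ref{3.4} via $H^k(\Lg,\Lg)\cong H^k(V,\Lg)^{\Ls}\oplus H^{k-3}(V,\Lg)^{\Ls}$ yields for $k=5,6,7,8,9,10,11$ respectively $\C,\C,\C,\C,0,\C,0$, which together with Proposition \ref{3.20} gives the claimed pattern.

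There is no real obstacle here once Proposition \ref{3.19} and the Poincar\'e duality reduction are in place; the only mild care is to check that the top exterior power $\Lambda^8(V_7)$ is the trivial $\Ls$-module (so duality is not twisted), which is immediate as a one-dimensional $\mathfrak{sl}_2(\C)$-module is necessarily $V_0$, and that $H^8(V,\Lg)^{\Ls}$ indeed sits in a four-term sequence whose neighbours vanish so that it is zero.
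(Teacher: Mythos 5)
Your proposal is correct and follows essentially the same route as the paper: reduce to $k\ge 5$ via Proposition \ref{3.20}, determine $H^j(V,V)^{\Ls}$ and $H^j(V,\Lg/V)^{\Ls}$ from the Section 3 multiplicity results together with Poincar\'e duality, feed these into the four-term sequences of Proposition \ref{3.19} to get $H^j(V,\Lg)^{\Ls}$, and conclude with the Hochschild--Serre formula of Proposition \ref{3.4}. The only difference is that you spell out the duality step (including the untwisted top exterior power $\Lambda^8(V_7)\cong V_0$) that the paper leaves implicit under ``using the results of section 3'', exactly as it did explicitly in the $V_5$ case.
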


\begin{proof}
We may assume $k\ge 5$ as before. Using the results of section $3$ we obtain
\begin{align*}
H^k(V,V)^{\Ls} & \cong  \begin{cases} \C, \text{ if } k=1,3,5,7 \\
0, \text{ if } k=0,2,4,6 \end{cases}
\end{align*}
and $H^k(V,\Lg/V)^{\Ls} = 0$ for all $1\le k\le 7$. The long exact sequence in the cohomology then implies that
\begin{align*}
H^k(V,\Lg)^{\Ls} & \cong  \begin{cases} \C, \text{ if } k=1,3,5,7 \\
0, \text{ if } k=0,2,4,6 \end{cases}
\end{align*}
So the  Hochschild-Serre formula gives
\begin{align*}
H^5(\Lg,\Lg) & \cong H^5(V,\Lg)^{\Ls} \oplus H^2(V,\Lg)^{\Ls} \cong \C, \\
H^6(\Lg,\Lg) & \cong H^6(V,\Lg)^{\Ls} \oplus H^3(V,\Lg)^{\Ls} \cong \C, \\
H^7(\Lg,\Lg) & \cong H^7(V,\Lg)^{\Ls} \oplus H^4(V,\Lg)^{\Ls} \cong \C,\\
H^8(\Lg,\Lg) & \cong H^5(V,\Lg)^{\Ls} \cong \C, \\
H^9(\Lg,\Lg) & \cong H^6(V,\Lg)^{\Ls} =0, \\
H^{10}(\Lg,\Lg) & \cong H^7(V,\Lg)^{\Ls} \cong \C. 
\end{align*}
\end{proof}

\begin{rem}
For $m=9$ and $\Lg=\mathfrak{sl}_2(\C)\ltimes V_9$ the result is different, since
$H^3(\Lg,\Lg)\cong \C^2$ in this case by Proposition $\ref{3.20}$. In fact, the spaces
$H^k(V,V)^{\Ls}$ for $V=V_m$, $\Ls=\mathfrak{sl}_2(\C)$ with $m=2n-1$ and odd $k\ge 3$ will grow rapidly.
\end{rem}

\section{Adjoint cohomology for Lie algebras of low dimension}

In this section we will compute the adjoint cohomology spaces $H^k(\Lg,\Lg)$ with $k=0,1,2$ for all
complex non-semisimple perfect Lie algebras of dimension $n\le 9$. \\[0.2cm]
The Levi decomposition of such algebras is given by $\Ls\ltimes \Lr$, where $\Ls$ is isomorphic
to either $\mathfrak{sl}_2(\C)$ or $\mathfrak{sl}_2(\C)\oplus \mathfrak{sl}_2(\C)$, and
$\Lr$ is the (non-trivial) solvable radical. \\[0.2cm]
Let $V$ be a $\mathfrak{sl}_2(\C)$-module. By Weyl's theorem, $V$ is a direct sum of simple modules $V_j$. Writing
\[
V_j^{\oplus e}=V_j\oplus \cdots \oplus V_j
\]
for the $e$-fold direct sum, we have 
\[
V \cong V_{n_1}^{\oplus e_1}\oplus \cdots \oplus V_{n_m}^{\oplus e_m},
\]
where $n_1,\ldots ,n_m \ge 1$ are pairwise distinct integers. Note that $V_0$ cannot appear
as a summand, since we assume that $\Lg$ is perfect. We have the following result.

\begin{prop}\label{5.1}
Let $\Lg=\mathfrak{sl}_2(\C)\ltimes V$, where $V$ is a direct sum of $V_j^{\oplus e_j}$ for $j=1,\ldots ,m$. 
Then $\Lg$ is perfect and we have 
\[
\dim  H^1(\Lg,\Lg)=e_1^2+\cdots +e_m^2.
\]
\end{prop}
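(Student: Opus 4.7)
The plan is to prove the two assertions separately: first perfection via the criterion from Proposition \ref{2.3}, then the dimension formula by combining the identification of $H^1(\Lg,\Lg)$ from the proof of Proposition \ref{3.1} with Schur's lemma.

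For perfection, I would note that the solvable radical of $\Lg$ is $\Lr=V$, and since $V$ is abelian we have $[\Lr,\Lr]=0$, so $V/[\Lr,\Lr]=V$ as an $\Ls$-module. By hypothesis each $n_j\ge 1$, so the trivial module $V_0$ does not appear as a summand of $V$. Proposition \ref{2.3} then gives that $\Lg$ is perfect.

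For the dimension computation, I would invoke the isomorphism
\[
H^1(\Lg,\Lg)\cong\Hom_{\Ls}(V,V)
\]
established in the proof of Proposition \ref{3.1}; that proof records this formula in full generality for $\Ls$ semisimple and $V$ an arbitrary $\Ls$-module, so simplicity of $V$ is not needed. Writing $V\cong V_{n_1}^{\oplus e_1}\oplus\cdots\oplus V_{n_m}^{\oplus e_m}$ with the $n_j$ pairwise distinct, Schur's lemma gives $\Hom_{\Ls}(V_{n_i},V_{n_j})=0$ for $i\ne j$ and $\End_{\Ls}(V_{n_j})\cong\C$, so
\[
\Hom_{\Ls}(V,V)\cong\bigoplus_{j=1}^m\End_{\Ls}\bigl(V_{n_j}^{\oplus e_j}\bigr)\cong\bigoplus_{j=1}^m M_{e_j}(\C),
\]
whose dimension is $e_1^2+\cdots+e_m^2$.

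There is no real obstacle here; the statement is essentially a block-diagonal decomposition of $\End_{\Ls}(V)$, with the nontrivial ingredient (identifying $H^1(\Lg,\Lg)$ with this endomorphism ring) already handled in Section 3. The only point requiring a moment of care is verifying that the formula from the proof of Proposition \ref{3.1} was stated for arbitrary $\Ls$-modules rather than only for simple ones, so that it applies in the present more general setting.
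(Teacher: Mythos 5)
Your proposal is correct and takes essentially the same route as the paper: the paper's proof likewise rests on the general identification $H^1(\Lg,\Lg)\cong\Hom_{\mathfrak{sl}_2(\C)}(V,V)$ (recorded in the proof of Proposition \ref{3.1}, valid for any $\Ls$-module $V$) followed by Schur's Lemma to get the block decomposition $M_{e_1}(\C)\oplus\cdots\oplus M_{e_m}(\C)$ of dimension $e_1^2+\cdots+e_m^2$. The perfection claim, which the paper leaves to the Levi-decomposition criterion of Section 2 since no trivial summand occurs in $V$, is verified by you in exactly that way.
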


\begin{proof}
We have
\[  
H^1(\Lg,\Lg)\cong\Hom_{\mathfrak{sl}_2(\C)}(V_{n_1}^{\oplus e_1}\oplus \cdots \oplus V_{n_m}^{\oplus e_m},
V_{n_1}^{\oplus e_1}\oplus \cdots \oplus V_{n_k}^{\oplus e_m}).
\]
Since $V_j$ is a simple $\mathfrak{sl}_2(\C)$-module, Schur's Lemma implies that this space equals
\[
M_{e_1}(\C)\oplus \cdots \oplus M_{e_m}(\C),
\]
which has dimension $e_1^2+\cdots +e_m^2$.
\end{proof}

Perfect Lie algebras also can have a solvable radical, which is not abelian. Let us introduce an example.

\begin{ex}
Let $\mathfrak{sl}_2(\C)={\rm span}\{ e_1,e_2,e_3\}$, and $\mathfrak{n}_3(\C)={\rm span}\{ e_4,e_5,e_6\}$ be the
Heisenberg Lie algebra, with $[e_4,e_5]=e_6$. Consider the following semidirect product of $\mathfrak{sl}_2(\C)$ and
$\Ln_3(\C)$, given by the following Lie brackets in the basis  $(e_1,\ldots ,e_6)$:
\vspace*{0.5cm}
\begin{align*}
[e_1,e_2] & = e_3,     & [e_2,e_3] & = 2e_2,    & [e_3,e_5]& =-e_5,\\
[e_1,e_3] & = -2e_1,   & [e_2,e_4] & = e_5,     & [e_4,e_5] & = e_6.\\
[e_1,e_5] & = e_4,     & [e_3,e_4] & = e_4,     &                   \\
\end{align*}
We denote this Lie algebra by $\mathfrak{sl}_2(\C) \ltimes \Ln_3(\C)$.
\end{ex}

This Lie algebra is perfect, non-semisimple and has a $1$-dimensional center 
$Z(\Lg)=\langle e_6\rangle$. One can generalize
this example to semidirect products $\Ls\ltimes \Ln_{2n+1}(\C)$ for a semisimple Lie algebra 
$\Ls$ and the Heisenberg Lie algebra $\Ln_{2n+1}(\C)$ of dimension $2n+1$.

\begin{prop}
Let $\Lg$ be a complex non-semisimple perfect Lie algebra of dimension $n\le 9$. Then $\Lg$ is isomorphic to
one of the following algebras, with the adjoint cohomology spaces $H^k(\Lg,\Lg)$ for $k=0,1,2$ given as follows:
\vspace*{0.5cm}
\begin{center}
\begin{tabular}{c|ccccc}
\color{red}{$\Lg$} & \color{blue}{$\dim \Lg$} & \color{green}{Turkowski} & \color{purple}{$\dim H^0(\Lg,\Lg)$}
& \color{orange}{$\dim H^1(\Lg,\Lg)$}  & \color{brown}{$\dim H^2(\Lg,\Lg)$} \\[4pt]
\hline
$\Ls\Ll_2(\C)\ltimes V_1$ & $5$ & $L_{5,1}$  & $0$ & $1$ & $0$ \\[4pt]
$\Ls\Ll_2(\C)\ltimes V_2$ & $6$ & $L_{6,4}\cong L_{6,1}$ & $0$ & $1$ & $1$ \\[4pt]
$\Ls\Ll_2(\C)\ltimes \Ln_3(\C)$  & $6$ & $L_{6,2}$  & $1$ & $1$ & $0$ \\[4pt]
$\Ls\Ll_2(\C)\ltimes V_3$ & $7$ & $L_{7,6}$  & $0$ & $1$ & $0$ \\[4pt]
$\Ls\Ll_2(\C)\ltimes (V_1\oplus V_1)$ & $7$ & $L_{7,7}$  & $0$ & $4$ & $0$ \\[4pt]
$\Ls\Ll_2(\C)\oplus (\Ls\Ll_2(\C)\ltimes V_1)$ & $8$ & $\Ls\Ll_2\oplus L_{5,1}$ & $0$ & $1$ & $0$ \\[4pt]
$\Ls\Ll_2(\C)\ltimes V_4$ & $8$ & $L_{8,21}$  & $0$ & $1$ & $1$ \\[4pt]
$\Ls\Ll_2(\C)\ltimes (V_1\oplus V_2)$ & $8$ & $L_{8,22}$  & $0$ & $2$ & $1$ \\[4pt]
$\Ls\Ll_2(\C)\ltimes (V_1\oplus \Ln_3(\C))$ & $8$ & $\color{red}{L_{8,13}^{\ep=0}}$  & $1$ & $3$ & $0$ \\[4pt]
$\Ls\Ll_2(\C)\ltimes \Lf_{2,3}(\C)$ & $8$ & $L_{8,15}$  & $0$ & $1$ & $1$ \\[4pt]
$\Ls\Ll_2(\C)\ltimes_{\phi} \Ln_5(\C)$ & $8$ & $L_{8,13}^1\cong L_{8,13}^{-1}$  & $1$ & $2$ & $1$ \\[4pt]
$\Ls\Ll_2(\C)\ltimes_{\psi} \Ln_5(\C)$ & $8$ & $L_{8,19}$  & $1$ & $1$ & $0$ \\[4pt]
$\Ls\Ll_2(\C)\oplus (\Ls\Ll_2(\C)\ltimes V_2)$ & $9$ & $\Ls\Ll_2\oplus L_{6,1}$  & $0$
& $1$ & $1$ \\[4pt]
$\Ls\Ll_2(\C)\oplus (\Ls\Ll_2(\C)\ltimes \Ln_3(\C))$  & $9$ & $\Ls\Ll_2\oplus L_{6,2}$ & $1$
& $1$ & $0$ \\[4pt]
$\Ls\Ll_2(\C)\ltimes V_5$ & $9$ & $L_{9,59}$  & $0$ & $1$ & $0$ \\[4pt]
$\Ls\Ll_2(\C)\ltimes (V_1\oplus V_3)$ & $9$ & $L_{9,60}$  & $0$ & $2$ & $0$\\[4pt]
$\Ls\Ll_2(\C)\ltimes (V_2\oplus V_2)$ & $9$ & $L_{9,61}$  & $0$ & $4$ & $4$ \\[4pt]
$\Ls\Ll_2(\C)\ltimes (V_1\oplus V_1\oplus V_1)$ & $9$ & $L_{9,63}$  & $0$ & $9$ & $0$ \\[4pt]
$\Ls\Ll_2(\C)\ltimes (V_2\oplus \Ln_3(\C))$ & $9$ & $L_{9,58}$  & $1$ & $2$ & $0$ \\[4pt]
$\Ls\Ll_2(\C)\ltimes (\Ln_3(\C)\oplus \Ln_3(\C))$ & $9$ & $L_{9,37}\cong L_{9,42}$  & $2$
& $2$ & $0$ \\[4pt]
$\Ls\Ll_2(\C)\ltimes \Lf_{3,2}(\C)$ & $9$ & $L_{9,62}$  & $0$ & $2$ & $2$ \\[4pt]
$\Ls\Ll_2(\C)\ltimes \CA_{6,4}(\C)$ & $9$ & $L_{9,41}$  & $2$ & $3$ & $1$ \\[4pt]
\end{tabular}
\end{center}
\end{prop}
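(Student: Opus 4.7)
The plan is to split the argument into two independent tasks: first, derive the classification list; second, compute the three cohomology spaces row by row.

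For the classification I would start from the Levi decomposition $\Lg=\Ls\ltimes\Lr$ indicated at the beginning of the section, with $\Ls\cong\mathfrak{sl}_2(\C)$ or $\mathfrak{sl}_2(\C)\oplus\mathfrak{sl}_2(\C)$ and $\Lr$ nilpotent by Lemma $2.2$. For each choice of $\Ls$ I would enumerate all nilpotent Lie algebras $\Lr$ of the allowed dimension carrying an $\Ls$-action, subject to Proposition $2.3$, which forbids a trivial $\Ls$-summand in $\Lr/[\Lr,\Lr]$. The abelian-radical cases come from decomposing $\Lr$ into irreducibles $V_j$ with $j\ge 1$. The non-abelian cases are controlled by the $\Ls$-invariant filtration $\Lr\supseteq[\Lr,\Lr]\supseteq 0$; this produces only finitely many options, namely $\Ln_3(\C)$ (Heisenberg), the filiform algebras $\Lf_{2,3}(\C)$ and $\Lf_{3,2}(\C)$, the two $\Ls$-invariant brackets on $\Ln_5(\C)$, and $\CA_{6,4}(\C)$. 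Cross-checking the resulting list against Turkowski's classification of low-dimensional algebras with nontrivial Levi decomposition then assigns each entry its label and confirms the list is complete.

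For the cohomology computations, $H^0(\Lg,\Lg)=Z(\Lg)$ is read off directly from the brackets. For $H^1$, the abelian-radical entries are covered by Proposition $5.1$; decomposable entries $\mathfrak{sl}_2(\C)\oplus\Lg'$ reduce by the K\"unneth formula and Whitehead's lemmas to $\dim H^1(\Lg',\Lg')$; and the non-abelian-radical entries are treated via Hochschild-Serre applied to $\Lr\subseteq\Lg$, which reduces the computation to $\Ls$-invariant outer derivations of $\Lr$. For $H^2$, the rows of the form $\mathfrak{sl}_2(\C)\ltimes V_m$ are handled by Proposition $3.18$; the decomposable rows again reduce by K\"unneth; and the remaining rows require the long exact sequence of Lemma $3.2$ together with an explicit computation of the first transgression in the Hochschild-Serre spectral sequence.

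The hard step will be the mixed cases $\mathfrak{sl}_2(\C)\ltimes\Lf_{2,3}(\C)$, $\mathfrak{sl}_2(\C)\ltimes_\phi\Ln_5(\C)$, $\mathfrak{sl}_2(\C)\ltimes_\psi\Ln_5(\C)$, $\mathfrak{sl}_2(\C)\ltimes\Lf_{3,2}(\C)$, and $\mathfrak{sl}_2(\C)\ltimes\CA_{6,4}(\C)$. Here the short exact sequence $0\to[\Lr,\Lr]\to\Lr\to\Lr/[\Lr,\Lr]\to 0$ does not collapse the cohomology to pure $\Ls$-invariant $\Hom$ spaces, so I would need concrete bases for $\Lr$ adapted to the $\Ls$-action, work out the Chevalley-Eilenberg differential on $\Ls$-invariant $1$- and $2$-cochains, and identify which invariant $2$-cocycles are coboundaries. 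This computation is feasible because $\dim\Lg\le 9$, but it is the only step that does not fit into a general theorem from the earlier sections, and the values of $\dim H^2$ equal to $1$ or $2$ in the last four rows will require particularly careful bookkeeping.
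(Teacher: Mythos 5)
Your overall strategy matches the paper's in its first half and diverges in its second: the paper also obtains the classification from Turkowski's lists (\cite{TUR1,TUR2}) rather than by an independent enumeration, but it settles all the cohomology values by computer calculation, cross-checked against Proposition \ref{5.1}, rather than by the hand computations you outline. Your reductions are in principle sound: $H^0(\Lg,\Lg)=Z(\Lg)$, $H^1(\Lg,\Lg)=\Der(\Lg)/\ad(\Lg)$, the K\"unneth argument for the decomposable rows, and the Hochschild--Serre degeneration $H^k(\Lg,\Lg)\cong\bigoplus_{i+j=k}H^i(\Ls,\C)\otimes H^j(\Lr,\Lg)^{\Ls}$ (valid with $\Lr$ the full nilradical, not only in the abelian case) all work. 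But for the five ``hard'' rows you explicitly stop at a promise of careful bookkeeping, so the specific values $\dim H^2=0,1,2,4$ in those rows are not actually derived by your argument; this is precisely the part the paper disposes of by machine.

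The more serious gap is in the classification step. You assert that the invariant-filtration analysis ``produces only finitely many options, namely'' a short list of radicals, which is exactly the nontrivial content to be proved (and your list as stated even omits radicals occurring in the table, such as $V_1\oplus\Ln_3(\C)$, $V_2\oplus\Ln_3(\C)$ and $\Ln_3(\C)\oplus\Ln_3(\C)$); you then defer completeness to a cross-check against Turkowski. That cross-check cannot certify completeness over $\C$ as it stands: Turkowski's classification is real and normalizes the parameter in $L_{8,13}^{\ep}$ to $\ep=\pm1$, so the complex algebra $\Ls\Ll_2(\C)\ltimes(V_1\oplus\Ln_3(\C))$ (the $\ep=0$ case) is absent from his tables and must be added by hand --- this is exactly the point the paper flags. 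In the same vein, passing from the real to the complex classification requires identifying which of Turkowski's real forms become isomorphic after complexification (e.g.\ $L_{6,4}\cong L_{6,1}$, $L_{8,13}^{1}\cong L_{8,13}^{-1}$, $L_{9,37}\cong L_{9,42}$), which your proposal does not address. Without handling the $\ep=0$ omission and these identifications, your ``confirms the list is complete'' step would reproduce an incomplete (or redundant) list.
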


\begin{proof}
The classification of perfect Lie algebras in low dimension is based on the the work of  
Turkowski, who classified Lie algebras with non-trivial Levi decomposition up to dimension $8$ over the real
numbers in \cite{TUR1}, and for dimension $9$ over real and complex numbers in \cite{TUR2}. He lists explicit Lie brackets
for all algebras. From this work it is not difficult to derive a classification of complex perfect Lie algebras of
dimension $n\le 9$. We need to add one Lie algebra though, which Turkowski has not in his list. It is the complexification of the
algebra $L_{8,13}^{\ep}$ for $\ep=0$, isomorphic to $\Ls\Ll_2(\C)\ltimes (V_1\oplus \Ln_3(\C))$. Turkowski only allows $\ep=\pm 1$
for $L_{8,13}$. We have computed the adjoint cohomology groups $H^k(\Lg,\Lg)$ for $k=0,1,2$ in each case by computer, and have
compared the results for the cases covered by Proposition $\ref{5.1}$.
Note that $H^0(\Lg,\Lg)=Z(\Lg)$ and $H^1(\Lg,\Lg)=\Der(\Lg)/\ad(\Lg)$.
\end{proof}

\section*{Acknowledgments}
We thank Anton Mellit and Michael Schlosser for helpful discussion. The first author was 
supported by the Austrian Science Foun\-da\-tion (FWF), Grant DOI 10.55776/P33811. 
For open access purposes, the authors have applied a 
CC BY 4.0 public copyright license to any author-accepted manuscript version arising from 
this submission.

\end{document}